\newcolumntype{L}[1]{>{\raggedright\arraybackslash}p{#1}}
\newtheorem{thm}{Theorem}[section]
\newtheorem*{thm*}{Theorem}
\newtheorem{lem}[thm]{Lemma}
\newtheorem{prop}[thm]{Proposition}
\theoremstyle{definition}
\newtheorem{definition}[thm]{Definition}
\newtheorem{remark}[thm]{Remark}
\newtheorem{example}[thm]{Example}
\numberwithin{equation}{section}
\newcommand{\kk}{\Bbbk}
\newcommand\xto[1]{\xrightarrow{#1}}
\newcommand{\mf}{\mathfrak}
\newcommand{\mb}{\mathbb}
\newcommand{\mc}{\mathcal}
\newcommand{\mbf}{\mathbf}
\DeclareMathOperator\rank{rank}
\DeclareMathOperator\SL{SL}
\DeclareMathOperator\GL{GL}
\DeclareMathOperator\SO{SO}
\DeclareMathOperator\OG{OG}
\DeclareMathOperator\Spin{Spin}
\DeclareMathOperator\Spec{Spec}
\DeclareMathOperator\depth{depth}
\DeclareMathOperator\grade{grade}
\DeclareMathOperator\ann{ann}
\DeclareMathOperator\pdim{pdim}
\DeclareMathOperator\Span{span}
\DeclareMathOperator\Ext{Ext}
\DeclareMathOperator\Hom{Hom}
\newcommand{\ssc}{\mathrm{ssc}}
\newcommand{\todo}[1]{{\color{blue}{(#1)}}}
\author{Lorenzo Guerrieri}
\address{}
\curraddr{}
\email{}
\thanks{}
\author{Tymoteusz Chmiel}
\address{}
\curraddr{}
\email{}
\thanks{}
\author{Xianglong Ni}
\address{}
\curraddr{}
\email{}
\thanks{}
\author{Jerzy Weyman}
\address{}
\curraddr{}
\email{}
\thanks{}
\begin{document}
	
	\title{Grade three perfect ideals and length four self-dual resolutions}
	
	\maketitle
	
	\begin{abstract}
		Starting with a grade three perfect ideal $I \subset R$, we demonstrate how to produce the a self-dual resolution of length four using the resolution of the original ideal. This process is also reversible. The main case of interest is when the grade three perfect ideal has type two, so the output complex resolves $R/J$ for a grade four Gorenstein ideal $J$. This suggests that the structure theory of these two families of ideals should be closely related.
	\end{abstract} 
	
	
	\section{Introduction}\label{sec:intro}
	Although understanding the structure theory of ideals in complete generality seems hopelessly out of reach, the situation for perfect ideals appears more tractable, and has been marked with a number of important successes in the previous century. Most well-known are the cases of grade two perfect ideals and grade three Gorenstein ideals. The former comes as an easy consequence of the Hilbert-Burch theorem (\cite{Hilbert1890} and \cite{Burch68}) on resolutions of cyclic modules with projective dimension two. Explicitly, every grade two perfect ideal is generated by the maximal minors of a $(n-1)\times n$ matrix. The structure theory of grade three Gorenstein ideals was determined by Buchsbaum and Eisenbud in \cite{Buchsbaum-Eisenbud77}, in which they show that any such ideal is generated by the submaximal pfaffians of a $n \times n$ skew matrix, where $n$ is odd.
	
	Importantly, in each of the two aforementioned cases, taking a generic $(n-1)\times n$ matrix (resp. $n \times n$ skew matrix) yields a grade two perfect (resp. grade three Gorenstein) ideal. Thus, the structure theorems identify the generic examples for each family, and consequently they are ``optimal'' in the sense that there cannot be any further necessary relations among the entries of the (skew) matrix.
	
	When we discuss structure theorems in the following, we refer to theorems of this type---ones that identify a generic example for a family, from which all other examples arise as specializations. With the case of grade two completely settled, as well as the case of grade three Gorenstein, the two next-simplest cases are: (1) grade three perfect ideals of type at least two, and (2) grade four Gorenstein ideals.
	
	Progress in the former direction has primarily been aided by linkage. For example, the structure theory of grade three almost complete intersections was deduced as a consequence of the theorem for Gorenstein ideals in \cite{Buchsbaum-Eisenbud77} in this manner. Continuing this, structure theorems for certain grade three perfect ideals directly linked to almost complete intersections were given in \cite{Brown87} and \cite{Sanchez89}. These examples are not exhaustive: even for deviation and type two, there exist ideals not covered by the structure theorem in \cite{Brown87}, see for instance \cite{CLKW20}. Recently, a representation-theoretic approach was used in \cite{GNW-ADE} to give structure theorems for all grade three perfect ideals of small type and deviation, but the general story remains largely mysterious.
	
	For grade four Gorenstein ideals, \cite{Herzog-Miller85} and \cite{Vasconcelos-Villarreal86} showed under mild hypotheses that any such ideal with at most six generators is a hypersurface section. However, this is not true if one increases the number of generators; examples generated by any odd $n \geq 7$ number of elements are given in \cite{Kustin-Miller82}, and there are also the well-known determinantal examples of \cite{Gulliksen-Negard72}. An analysis of the resolutions of grade four Gorenstein ideals is given in \cite{Reid15}, but unfortunately it does not result in a theorem in the style of Buchsbaum-Eisenbud.
	
	At a glance, it is not easy to say which of these two directions ought to exhibit a simpler structure theory. From the perspective of free resolutions, compared to the resolution of a grade three perfect ideal, there are fewer matrices to prescribe for the resolution of a grade four Gorenstein ideal owing to its self-duality (i.e. two matrices instead of three).

	On the other hand, there are various constructions which take a grade $c$ perfect ideal and produce a grade $c+1$ Gorenstein ideal. To name a few:
	\begin{itemize}
		\item One can take the sum of two geometrically linked ideals, as demonstrated in \cite{Peskine-Szpiro74} and studied further in \cite{Ulrich90}.
		\item There is the ``doubling'' construction, based on the existence of a canonical ideal, given in \cite[Lemma 3.2]{Kustin-Miller82}.
		\item There is the ``big from small'' construction of \cite{Kustin-Miller83}, later dubbed unprojection and analyzed further in \cite{Papadakis-Reid04}.
	\end{itemize}
	The existence of these constructions suggests that the structure theory of grade four Gorenstein ideals is at least as complicated as the situation for grade three perfect ideals.
	
	Our contribution in this paper is another construction, distinct from the preceding, which (in a special case) produces a grade four Gorenstein ideal from a grade three perfect ideal of type two. Despite its narrow scope, this construction has a major advantage: it is \emph{reversible}. Consequently, this construction could potentially be used to translate structure theorems from one family of ideals to the other, an idea which has traditionally been enabled by linkage. Informally, it also suggests that these two apparently orthogonal directions have comparable difficulty---unfortunately this difficulty appears to be quite high either way!
	
	The organization of this paper is as follows.
	We fix notation and give the necessary commutative algebra background in \S\ref{sec:background}. After that, we turn to our main construction: in \S\ref{sec:grade three to four} we demonstrate how to produce a self-dual resolution of length four starting from a grade three perfect ideal, and in \S\ref{sec:grade four to three} we show how to reverse this procedure. Finally, we conclude with some examples in \S\ref{sec:examples}, where we also discuss the primary application of this machinery to \cite{GNW-ADE} in a future work: the classification of all grade four Gorenstein ideals on up to eight generators.
	
	\subsection*{Acknowledgments}
	Work on this paper began while the authors were in residence at the Simons Laufer Mathematical Sciences Institute (formerly MSRI) in Berkeley, California, during the Spring 2024 semester. The authors would like to thank Ela Celikbas, Lars Christensen, David Eisenbud, Sara Angela Filippini, Craig Huneke, Witold Kraskiewicz, Andrew Kustin, Jai Laxmi, Claudia Polini, Steven Sam, Frank-Olaf Schreyer, Jacinta Torres, Bernd Ulrich, and Oana Veliche for interesting discussions pertaining to this paper and related topics.

	\section{Notation and background}\label{sec:background}
	\subsection{The ambient ring}
	Throughout, all rings considered are commutative and Noetherian. The primary case of interest is when they are furthermore either local or graded. In the latter case, we consider homogeneous ideals and their graded free resolutions.
	
	All modules considered are finitely generated. For an $R$-module $M$, its grade is defined to be
	\[
		\grade M \coloneqq \inf_i \{ \Ext^i(M,R) \neq 0\}.
	\]
	If $I \subset R$ is an ideal, it is a customary abuse of notation to use $\grade I$ to denote $\grade R/I$. By the above definition, the grade of the unit ideal is $\infty$. We say the module $M$ is \emph{perfect} if $\pdim_R M = \grade M$. This is equivalent to stating that $M$ admits a finite free resolution whose dual is also acyclic; the dual then resolves $\Ext^c(M,R)$ where $c = \grade M$. Again, we say an ideal $I$ is perfect if the module $R/I$ is.
	
	We say that a grade $c$ perfect module $M$ is self-dual if $\Ext^c(M,R) \cong M$. If furthermore $M \cong R/J$ for an ideal $J$, we say that the ideal $J$ is \emph{Gorenstein}. Note that if the ambient ring $R$ were itself a Gorenstein ring, then this would exactly be the case in which the quotient $R/J$ is also.
	
	\subsection{Notation for free resolutions}\label{subsec:A-B-notation}
	The \emph{format} of a finite free resolution $\mb{F}$ is the sequence of ranks $(\rank F_0, \rank F_1,\ldots)$.
	
	We fix two integers $p \geq 1$ and $q \geq 3$. We will be concerned with resolutions of length three with format $(1,p+2, p+q,q-1)$, and self-dual resolutions of length four with format $(q-2,p+q,2(p+2),p+q,q-2)$. For the former, we will use the following notation for the complex:
	\[
		\mb{A} \colon 0 \to A_3 \xto{d_3} A_2 \xto{d_2} A_1 \xto{d_1} A_0.
	\]
	Here $A_0 = R$, $A_1= R^{p+2}$, $A_2= R^{p+q}$, and $A_3 = R^{q-1}$.
	
	We will also deal with self-dual resolutions of length four of the form
	\[
		\mb{B} \colon 0 \to B_0^* \xto{\delta_4 = \delta_1^*} B_1^* \xto{\delta_3 = \delta_2^*} B_2^* \cong B_2 \xto{\delta_2} B_1 \xto{\delta_1} B_0.
	\]
	Here $B_0 = R^{q-2}$, $B_1 = R^{p+q}$, and $B_2 = H \oplus H^*$ where $H=R^{p+2}$ and the isomorphism $B_2^* \cong B_2$ is the evident one.
	
	If $\mb{B}$ resolves $M$, then $M$ is a grade 4 self-dual module. However, it is not clear whether every such module admits an explicitly self-dual resolution of the given form. This is known under the assumptions $M$ is cyclic, $1/2\in R$, and either:
	\begin{itemize}
		\item \cite{Celikbas-Laxmi-Weyman-23} $R = \kk[x_1,\ldots,x_n]$ is a graded polynomial ring over a field $\kk$, with all variables of positive degree, and $M$ is a graded module, or
		\item \cite{Kustin-Miller80} $R$ is a complete regular local ring.
	\end{itemize}
	\subsection{Verifying acyclicity}\label{sec:bg-ffrBE}
	We recall the Buchsbaum-Eisenbud acyclicity criterion:
	\begin{thm}[\cite{Buchsbaum-Eisenbud73}]
		A complex of free $R$-modules
		\[
			0 \to F_c \xto{d_c} F_{c-1} \xto{d_{c-1}} \cdots \xto{d_2} F_1 \xto{d_1} F_0
		\]
		is exact if and only if, for all $i$ from 1 to $c$,
		\begin{itemize}
			\item $\rank d_i + \rank d_{i+1} = \rank F_i$, and
			\item $\grade I_{\rank d_i}(d_i) \geq i$.
		\end{itemize}
	\end{thm}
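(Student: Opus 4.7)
The plan is to prove the two directions separately, by careful localization arguments.

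For necessity, assume $\mb{F}$ is acyclic. The rank identity follows by localizing at any minimal prime $\mf{p}$ of $R$: the localized complex is a bounded exact sequence of free modules over the Artinian ring $R_\mf{p}$, hence splits, and a rank count yields $\rank d_i + \rank d_{i+1} = \rank F_i$. For the grade condition I would argue by contradiction. Setting $r_i = \rank d_i$, suppose $\grade I_{r_i}(d_i) < i$, and choose a prime $\mf{p} \supseteq I_{r_i}(d_i)$ with $\depth R_\mf{p} < i$. Localizing at $\mf{p}$, the Auslander--Buchsbaum formula gives $\pdim_{R_\mf{p}} \mathrm{coker}(d_1)_\mf{p} \leq \depth R_\mf{p} < i$, so $(\mb{F})_\mf{p}$ is a non-minimal free resolution at or before position $i$. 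After a change of basis a unit must then appear in $(d_i)_\mf{p}$, contradicting $I_{r_i}(d_i) \subseteq \mf{p}$.

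For sufficiency, the cleanest route is via the Peskine--Szpiro acyclicity lemma: over a local ring, a complex of free modules with $\depth F_i \geq i$ for each $i \geq 1$ and with every nonvanishing $H_i$ ($i \geq 1$) of depth zero is already acyclic. Assuming the rank and grade hypotheses, suppose for contradiction that $H_i(\mb{F}) \neq 0$ for some $i \geq 1$, and pick an associated prime $\mf{p}$ of $H_i$, so that $\depth_{R_\mf{p}} (H_i)_\mf{p} = 0$. The grade hypothesis $\grade I_{r_j}(d_j) \geq j$ forces $I_{r_j}(d_j) \not\subseteq \mf{p}$ for $j > \depth R_\mf{p}$, so $(d_j)_\mf{p}$ attains its generic rank in those degrees; combined with the rank identity, this splits $(\mb{F})_\mf{p}$ into short exact pieces above the critical degree. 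Applying the acyclicity lemma to a suitable truncation then rules out any nonvanishing $H_i(\mb{F})_\mf{p}$, giving the desired contradiction.

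The main obstacle is the sufficiency direction. The difficulty is threading the rank conditions---which control the expected splitting behavior of the differentials---together with the grade bounds, which control the primes where this splitting can fail, so that the two together suffice to kill all positive-degree homology. An equivalent self-contained route would be induction on the length $c$: the base case $c=1$ is McCoy's theorem, that $d_1$ is injective if and only if $I_{\rank F_1}(d_1)$ contains a non-zero-divisor, and the inductive step would truncate the top of the complex and verify that the rank and grade hypotheses persist for the shorter complex. Tracking how Fitting ideals and their grades behave under this truncation is the principal technical hurdle.
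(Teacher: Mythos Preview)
The paper does not prove this theorem at all: it is quoted as background, with a citation to \cite{Buchsbaum-Eisenbud73}, and no argument is supplied. So there is no ``paper's own proof'' to compare against; you have written a proof sketch for a result the authors simply import.

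That said, your sketch is largely along standard lines but has two genuine imprecisions worth flagging. In the necessity direction, from $\pdim_{R_\mf{p}} \operatorname{coker}(d_1)_\mf{p} < i$ you conclude that ``a unit must appear in $(d_i)_\mf{p}$'', and claim this contradicts $I_{r_i}(d_i)\subseteq\mf{p}$. But a unit entry only says $I_1(d_i)\not\subseteq\mf{p}$, which is not the contradiction you want. The correct statement is that $\operatorname{im}(d_i)_\mf{p}$ is a free direct summand of $(F_{i-1})_\mf{p}$ of rank $r_i$, so some $r_i\times r_i$ minor of $(d_i)_\mf{p}$ is a unit; that is what contradicts $I_{r_i}(d_i)\subseteq\mf{p}$.

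In the sufficiency direction, you localize at an associated prime $\mf{p}$ of a single nonvanishing $H_i$ and then invoke the Peskine--Szpiro acyclicity lemma. But that lemma requires \emph{every} nonzero $H_j$ (for $j\geq 1$) of the localized complex to have depth zero, and choosing $\mf{p}\in\operatorname{Ass}H_i$ guarantees this only for $j=i$. The fix is to take $\mf{p}$ minimal in $\bigcup_{j\geq 1}\operatorname{Ass}H_j$; then every $(H_j)_\mf{p}$ is either zero or has $\mf{p}R_\mf{p}$ as its unique associated prime, and after truncating above degree $\depth R_\mf{p}$ (where you have correctly argued the complex splits) the lemma applies. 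Your alternative inductive route via McCoy's theorem is in fact closer to the original argument in \cite{Buchsbaum-Eisenbud73}.
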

	Let $s \geq 0$ be an integer. We say something holds \emph{in grade $s$} if it is true over all localizations $R_\mf{p}$ where $\mf{p}$ is a prime with $\grade \mf{p} \leq s$. For example, a relation such as $d^2 = 0$ holds if and only if it does in grade zero, i.e. generically.
	
	A complex $\mb{F}$ is acyclic in grade $s$ exactly when the first condition above holds and we have $\grade I_{\rank d_i}(d_i) \geq \min(i,s+1)$. The acyclicity of finite free complexes of length $c$ can therefore be checked in grade $c-1$.

	\section{From $\mb{A}$ to $\mb{B}$}\label{sec:grade three to four}
	Let
	\[
	\mb{A} \colon 0 \to A_3 \xto{d_3} A_2 \xto{d_2} A_1 \xto{d_1} A_0 \cong R.
	\]
	be a free resolution over the ring $R$. We assume that its dual $\mb{A}^*$ is also acyclic. If $R/I$ is the module resolved by $\mb{A}$, this is equivalent to stating that the ideal $I$ is either a grade three perfect ideal or the unit ideal; of course the former will be the case of interest. The alternating sum $\sum (-1)^i \rank A_i$ is zero, so the format of $\mb{A}$ is $(1,p+2,p+q,q-1)$ for suitable integers $p$ and $q$. For the purposes of the construction, we assume that $p \geq 1$ (which is automatic if $\mb{A}$ is not split) and also that $q \geq 3$.
	
	\subsection{Multiplicative structure on $\mb{A}$}
	We fix a differential graded algebra structure on $\mb{A}$, c.f. \cite{Buchsbaum-Eisenbud77}. This gives us a multiplication $A_m \otimes A_n \to A_{m+n}$ that is graded-commutative and satisfies the Leibniz rule. Since $\mb{A}$ has length three, this multiplication is also strictly associative, but that will not be important for our purposes.
	
	To be explicit, this multiplication can be chosen as follows. One begins by defining the product of degree one elements by taking any lift in the diagram below:
	\[
	\begin{tikzcd}
		0 \ar[r] & A_3 \ar[r] & A_2 \ar[r] & A_1 \ar[r] & R\\
		&& \bigwedge^2 A_1 \ar[u,dashed] \ar[ur]
	\end{tikzcd}
	\] 
	Here the diagonal map sends $e_1 \wedge e_2 \mapsto d_1(e_1) e_2 - d_1(e_2) e_1$. Then one defines the multiplication $A_1 \otimes A_2 \to A_3$ by lifting in the below diagram:
	\[
	\begin{tikzcd}
		0 \ar[r] & A_3 \ar[r] & A_2 \ar[r] & A_1 \ar[r] & R\\
		& A_1 \otimes A_2 \ar[u,dashed] \ar[ur]
	\end{tikzcd}
	\]
	Here the diagonal map sends $e \otimes f \mapsto d_1(e) f + d_2(f) e$ where the term $d_2(f) e$ is computed using the multiplication $\bigwedge^2 A_1 \to A_2$ already defined.
	
	We observe that any two choices of $\bigwedge^2 A_1 \to A_2$ differ by $d_3 b$ where $b$ is some map $\bigwedge^2 A_1 \to A_3$.
	
	\begin{example}\label{ex:split A}
		Let $\mb{A}$ be the split complex
		\begin{equation}\label{eq:split A}
			0 \to A_3 \to A_3 \oplus M \to R \oplus M \to R.
		\end{equation}
		where $M = R^{p+1}$. If $a\in A_3$, $m\in M$, $r \in R$, we let $a_{(i)}$ ($i = 2,3$), $m_{(i)}$ ($i = 1,2$), $r_{(i)}$ ($i=0,1$) denote $a$, $m$, $r$ viewed elements in homological degree $i$. 
		
		A particularly simple choice of multiplication on $\mb{A}$, indeed the unique $\GL(A_3)\times\GL(M)$-equivariant one, is given as follows:
		\begin{itemize}
			\item The product $r_{(1)} \cdot m_{(1)}$ is $m_{(2)}$. All products $m_{(1)} \cdot m'_{(1)}$ are equal to zero.
			\item The product $r_{(1)} \cdot a_{(2)}$ is $a_{(3)}$. All products $r_{(1)} \cdot m_{(2)}$, $m_{(1)} \cdot m'_{(2)}$, $m_{(1)} \cdot a_{(2)}$ are equal to zero.
		\end{itemize}
		
		Let $b$ be a map $\bigwedge^2 A_1 \to A_3$. Then we get a different choice of multiplication on $\mb{A}$ by:
		\begin{itemize}
			\item adding $d_3b(e_1\wedge e_2)$ to all products $e_1 \cdot e_2$ where $e_1,e_2 \in A_1$, and
			\item adding $b(d_2(f) \wedge e)$ to all products $e \cdot f$ where $e\in A_1$ and $f \in A_2$.
		\end{itemize}
		Furthermore, every multiplication on $\mb{A}$ is obtained in this manner for some choice of $b$.
	\end{example}
	
	\subsection{The construction}
	 Choose a decomposition $A_3 = C \oplus L$ where $C\cong R^{q-2}$ and $L \cong R^1$. We next describe the differentials of a complex
	\[
		\mb{B} \colon 0 \to C \xto{\delta_4} A_2 \xto{\delta_3} (A_1^* \otimes L) \oplus A_1 \xto{\delta_3^*} A_2^* \otimes L \xto{\delta_4^*} C^* \otimes L
	\]
	which is self-dual (up to an overall twist).
	\begin{itemize}
		\item The differential $\delta_4$ is the composite $C \xhookrightarrow{i} A_3 \xto{d_3} A_2$, i.e. the $(p+q)\times(q-2)$ submatrix of $d_3$ corresponding to $C \subset A_3$.
		\item Let $\eta\colon A_3 \to L$ be the projection. The differential $\delta_3$ is then given by
		\[
			\delta_3(f) = (\eta((-)\cdot f), \, d_2(f))
		\] 
		for $f \in A_2$. Here the notation $\eta((-)\cdot f)$ means the map sending $e \in A_1$ to $\eta(e\cdot f) \in L$, where we view this map as an element of $A_1^* \otimes L = \Hom(A_1,L)$.
	\end{itemize}
	We write $J = J_C$ for the ideal $I_{q-2}(\delta_4)$. Note that this construction is compatible with localization, or more generally with arbitrary base change preserving acyclicity of $\mb{A}$.
	\begin{remark}
		If $R$ is graded and the differentials of $\mb{A}$ are homogeneous of degree zero, then one may choose a multiplication on $\mb{A}$ that is also homogeneous of degree zero with respect to this internal grading. With that choice, the differentials of $\mb{B}$ will also be homogeneous of degree zero.
	\end{remark}
	
	\begin{lem}\label{lem:B complex}
		As defined above, $\mb{B}$ is a complex.
	\end{lem}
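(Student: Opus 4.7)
The plan is to verify that the three successive compositions in $\mathbf{B}$---namely $\delta_3\circ\delta_4$, $\delta_3^*\circ\delta_3$, and $\delta_4^*\circ\delta_3^*$---all vanish. The third is the dual (up to the overall twist by $L$) of the first, so once the first is handled it comes along for free. Both of the remaining identities should reduce, after unwinding the definitions, to the Leibniz rule for the chosen DGA structure on $\mathbf{A}$, applied in situations where the product would land in $A_{\geq 4}=0$.

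For $\delta_3\circ\delta_4=0$, fix $c\in C\subset A_3$. The second coordinate of $\delta_3(\delta_4(c))$ is $d_2d_3(c)=0$. For the first coordinate we need $\eta(e\cdot d_3(c))=0$ for every $e\in A_1$. Since $e\cdot c\in A_4=0$, Leibniz gives
\[
0 \;=\; d(e\cdot c) \;=\; d_1(e)\cdot c \;-\; e\cdot d_3(c),
\]
so $e\cdot d_3(c)=d_1(e)\cdot c$ in $A_3$. Applying $\eta$ and using $c\in C=\ker\eta$ then yields $\eta(e\cdot d_3(c)) = d_1(e)\,\eta(c) = 0$, as required.

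For $\delta_3^*\circ\delta_3=0$, my first step would be to rewrite this composition as a bilinear pairing $A_2\otimes A_2\to L$. Unwinding the self-duality identification for $B_2=H\oplus H^*$ (which supplies the identification of $B_2^*\otimes L$ with $B_2$ via the symmetric hyperbolic form) and using the explicit formula $\delta_3(f)=(\eta((-)\cdot f),\,d_2(f))$, one obtains
\[
\bigl(\delta_3^*\delta_3(f_1)\bigr)(f_2) \;=\; \eta\bigl(d_2(f_1)\cdot f_2\bigr) \;+\; \eta\bigl(d_2(f_2)\cdot f_1\bigr).
\]
This now vanishes by Leibniz applied to $f_1f_2\in A_4=0$: we obtain $d_2(f_1)\cdot f_2 + f_1\cdot d_2(f_2)=0$ in $A_3$, and graded commutativity (with sign $(-1)^{|f_1|\cdot|d_2(f_2)|}=1$) converts $f_1\cdot d_2(f_2)$ into $d_2(f_2)\cdot f_1$. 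Applying $\eta$ gives exactly the desired identity.

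The calculations themselves are short. What I expect to cost the most effort is the bookkeeping needed to extract the correct bilinear expression for $\delta_3^*\circ\delta_3$---in particular, confirming that the self-dual structure on $\mathbf{B}$ is set up so that the pairing on $H\oplus H^*$ is the \emph{symmetric} hyperbolic form (with an antisymmetric convention the two $\eta$-terms would appear with the same sign after commutativity, and the identity would fail), and that the twist by $L$ is tracked consistently across all three differentials so that the dualization step really does turn the vanishing of $\delta_3\delta_4$ into the vanishing of $\delta_4^*\delta_3^*$.
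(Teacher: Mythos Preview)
Your proof is correct. In fact, it is precisely the argument the authors allude to when they write ``The preceding lemma is not difficult to prove directly using the Leibniz rule for the multiplication on $\mathbf{A}$, which we leave as a fun exercise for the reader.'' Your computation of $(\delta_3^*\delta_3(f_1))(f_2)=\eta(d_2(f_1)\cdot f_2)+\eta(d_2(f_2)\cdot f_1)$ is exactly right once one unwinds the hyperbolic pairing on $(A_1^*\otimes L)\oplus A_1$, and your bookkeeping concerns are well-founded but do resolve correctly.

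The paper, however, takes a different route for its official proof: rather than invoking Leibniz directly, it observes that $d^2=0$ is an equational condition and hence can be checked in grade zero. Over any localization $R_\mathfrak{p}$ with $\grade\mathfrak{p}\leq 2$, the complex $\mathbf{A}$ becomes split exact, so the claim reduces to Lemma~\ref{lem:A->B split case}, which handles the split case by an explicit block-matrix computation. The authors prefer this because the same reduction-to-split-case machinery (and the same Lemma~\ref{lem:A->B split case}) is reused immediately afterward to prove Theorem~\ref{thm:A->B}; thus their proof of Lemma~\ref{lem:B complex} is essentially free once Lemma~\ref{lem:A->B split case} is in hand. Your direct approach is more self-contained and arguably more illuminating about \emph{why} the DGA structure is exactly what is needed, but it does not feed into the acyclicity argument in the same way.
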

	\begin{proof}
		From the remarks in \S\ref{sec:bg-ffrBE}, it suffices to check this in grade zero, i.e. over localizations $R_\mf{p}$ where $\grade \mf{p} = 0$. In particular, $\grade \mf{p} \leq 2$ so $\mb{A} \otimes R_\mf{p}$ becomes split exact. Thus we may reduce to the case that $\mb{A}$ is split to begin with, and the claim follows from Lemma~\ref{lem:A->B split case}.
	\end{proof}
	The preceding lemma is not difficult to prove directly using the Leibniz rule for the multiplication on $\mb{A}$, which we leave as a fun exercise for the reader. We offer this more ``efficient'' proof because we will leverage Lemma~\ref{lem:A->B split case} to prove the next theorem also.

	\begin{thm}\label{thm:A->B}
		Suppose that $\grade J \geq 4$. Then $\mb{B}$ is an acyclic complex. In particular, either $\grade J = 4$ or $J = (1)$.
	\end{thm}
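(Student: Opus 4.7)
The plan is to apply the Buchsbaum--Eisenbud acyclicity criterion. Since $\mb{B}$ is self-dual up to the twist by $L$ and dualization preserves ideals of minors, the rank and grade conditions on $\delta_2=\delta_3^*$ and $\delta_1=\delta_4^*$ will follow automatically from the corresponding (stronger) conditions on $\delta_3$ and $\delta_4$. The task therefore reduces to verifying the rank conditions and the grade bounds $\grade I_{q-2}(\delta_4) \geq 4$ and $\grade I_{p+2}(\delta_3)\geq 3$.

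The rank conditions can be checked at a single prime of grade $0$. Since $\grade I = 3$, any such prime fails to contain $I$, so $I R_\mf{p} = R_\mf{p}$ and $\mb{A}\otimes R_\mf{p}$ becomes split exact; by compatibility of the construction with localization, Lemma~\ref{lem:A->B split case} shows that the localized $\mb{B}$ is also split exact, giving the expected ranks $\rank\delta_4=q-2$ and $\rank\delta_3=p+2$. The grade bound on $\delta_4$ is exactly the hypothesis, since $I_{q-2}(\delta_4) = J$ by definition. The principal remaining task---and, I expect, the main obstacle---is to show $\grade I_{p+2}(\delta_3)\geq 3$. For this I would localize at a prime $\mf{p}$ of grade at most $2$; again $\mf{p}\not\supseteq I$, so $\mb{A}\otimes R_\mf{p}$ becomes split exact, and the split-case description of $\mb{B}$ provided by Lemma~\ref{lem:A->B split case} exhibits a unit $(p+2)\times(p+2)$ minor of $\delta_3$, i.e.\ $I_{p+2}(\delta_3) R_\mf{p} = R_\mf{p}$. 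Hence $I_{p+2}(\delta_3)$ is locally the unit ideal at every prime of grade $\leq 2$, which gives the desired grade bound.

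With acyclicity in hand, the final assertion follows by a standard Fitting-ideal argument. The ideal $J = I_{q-2}(\delta_4^*)$ is a Fitting ideal of $H_0(\mb{B}) = \coker(\delta_4^*)$, so $J \subseteq \ann H_0(\mb{B})$. Since $\mb{B}$ is a length-four free resolution of $H_0(\mb{B})$, one has $\grade H_0(\mb{B}) \leq 4$, whence $\grade J \leq 4$; combined with the hypothesis this forces $\grade J = 4$ unless $H_0(\mb{B})=0$, in which case $\delta_4^*$ is split surjective and $J=(1)$. The entire argument thus hinges on using $\grade J \geq 4$ and $\grade I = 3$ to force $\mb{A}$ to become split after localizing at the appropriate depth, so that the split-case description of $\mb{B}$ does all the heavy lifting.
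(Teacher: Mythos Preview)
Your proof is correct and follows essentially the same route as the paper: reduce the Buchsbaum--Eisenbud criterion to the condition on $\delta_4$ (which is the hypothesis) and the condition on $\delta_3$, then verify the latter by localizing at primes of grade $\leq 2$, where $\mb{A}$ splits and Lemma~\ref{lem:A->B split case} forces $\mb{B}$ to split as well. Your write-up is more explicit than the paper's---you spell out the self-duality reduction and give a clean Fitting-ideal argument for the ``in particular'' clause that the paper leaves implicit---but the underlying strategy is identical.
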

	\begin{proof}
		Just as above, by \S\ref{sec:bg-ffrBE} it suffices to check $\mb{B} \otimes R_\mf{p}$ is acyclic for primes $\mf{p}$ with $\grade \mf{p} \leq 2$, because we already assume that the minors of $\delta_4$ have sufficient grade. For such a prime $\mf{p}$, $\mb{A} \otimes R_\mf{p}$ is split exact. So we again reduce to the case that $\mb{A}$ is split and use Lemma~\ref{lem:A->B split case}.
	\end{proof}
	
	\begin{lem}\label{lem:A->B split case}
		If $\mb{A}$ is split exact, then $\mb{B}$ is also.
	\end{lem}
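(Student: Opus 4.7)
The plan is to reduce to an explicit split model via Example~\ref{ex:split A}: when $\mathbf{A}$ is split exact, we may take it to be $0 \to A_3 \to A_3 \oplus M \to R \oplus M \to R$ equipped with the $\GL(A_3) \times \GL(M)$-equivariant ``simple'' multiplication, possibly modified by some alternating map $b \colon \bigwedge^2 A_1 \to A_3$. I will first handle $b = 0$ by direct computation, and then show that a general $b$ only amounts to applying an isometry of $B_2$, which does not affect acyclicity.

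For $b = 0$: fix bases $\{c_j\}$ of $C$ and $\{\ell_0\}$ of $L$ (so $A_3 = C \oplus L$), bases $\{e_0\}$ and $\{f_i\}$ of the summands $R, M$ of $A_1$, and the basis $\{g_i\}$ of $M$ sitting inside $A_2$. The simple multiplication then gives $e_0 \cdot c_j = c_j$ and $e_0 \cdot \ell_0 = \ell_0$, with every other relevant product zero, while $d_2(g_i) = f_i$ and $d_2$ vanishes on $C \oplus L$. Thus
\[
    \delta_4(c_j) = c_j, \qquad \delta_3(c_j) = 0, \qquad \delta_3(\ell_0) = (e_0^\ast \otimes \ell_0,\, 0), \qquad \delta_3(g_i) = (0,\, f_i),
\]
and by duality $\delta_3^\ast$ sends $f_i^\ast \otimes \ell_0 \mapsto g_i^\ast \otimes \ell_0$ and $e_0 \mapsto \ell_0^\ast \otimes \ell_0$ (annihilating the other generators), while $\delta_4^\ast$ is the projection onto $C^\ast \otimes L$. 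The complex $\mathbf{B}$ then visibly splits as a direct sum of six contractible two-term complexes---one for each of $C$ (in degrees $0,1$), $L$ and $M$ (in degrees $1,2$), $R$ and $M^\ast \otimes L$ (in degrees $2,3$), and $C^\ast \otimes L$ (in degrees $3,4$)---hence is split exact.

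For general $b$, define $\gamma \colon A_1 \to A_1^\ast \otimes L$ by $\gamma(e')(e) = \eta(b(e' \wedge e))$. The modification changes $\alpha(f) := \eta((-) \cdot f)$ by $\gamma(d_2(f))$, so $\delta_3^{\mathrm{new}} = \Phi \circ \delta_3^{b=0}$, where $\Phi \in \Aut(B_2)$ is the automorphism $(g, e) \mapsto (g + \gamma(e), e)$. The alternating nature of $b$ makes $\gamma$ skew-symmetric, and a short calculation then shows $\Phi$ is an isometry of the canonical hyperbolic pairing $Q((g, e), (g', e')) = g(e') + g'(e)$ on $B_2$. Consequently $\delta_3^{\mathrm{new}, \ast} = \delta_3^{b=0, \ast} \circ \Phi^{-1}$, and since $\delta_4$ is unchanged the acyclicity of $\mathbf{B}^{\mathrm{new}}$ follows from that of $\mathbf{B}^{b=0}$. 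The main delicate point is this isometry property of $\Phi$; once it is in hand, the rest of the argument is routine.
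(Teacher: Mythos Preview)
Your argument is correct. The paper's own proof is shorter and more direct: it writes out $\delta_3$ as a single block matrix valid for arbitrary $b$, observes the two identity blocks (so $I_{p+2}(\delta_3)=(1)$), notes that the $M\to M^*\otimes L$ block is alternating (which is exactly your skewness of $\gamma$, used there to verify $\delta_3^*\delta_3=0$), and concludes split exactness since $\delta_4$ is a split inclusion. You instead separate the problem into the simple case $b=0$, handled by an explicit contractible decomposition, and then absorb a general $b$ via the isometry $\Phi$ of $B_2$, yielding an isomorphism of complexes $\mathbf{B}^{\mathrm{new}}\cong\mathbf{B}^{b=0}$. Your route is slightly longer but more conceptual: it makes transparent \emph{why} the choice of multiplication is irrelevant---any two choices differ by an element of the orthogonal group acting on $B_2$. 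The paper's route is a one-shot computation. One cosmetic point: your degree labels for the contractible summands run opposite to the paper's homological convention (where $C=B_0^*$ sits in degree $4$), but this does not affect the substance.
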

	\begin{proof}
		Writing $\mb{A}$ in the form \eqref{eq:split A}, by Example~\ref{ex:split A} there exists $b\colon \bigwedge^2 A_1 \to A_3$ so that the multiplication $A_1 \otimes A_2 \to A_3$ is given as follows:
		\begin{align*}
			r_{(1)} \otimes  a_{(2)} &\mapsto a_{(3)}\\
			r_{(1)} \otimes  m_{(2)} &\mapsto b(m_{(1)} \wedge r_{(1)})\\
			m_{(1)} \otimes a_{(2)} &\mapsto 0\\
			m_{(1)} \otimes m'_{(2)} &\mapsto b(m'_{(1)} \wedge m_{(1)})
		\end{align*}
		for $a \in A_3$, $r \in R$, and $m,m' \in M$.
		
		Thus the differential $\delta_3$ has the block form
		\[
		\begin{blockarray}{cccc}
			& C & L & M \\[5pt]
			\begin{block}{c[ccc]}
				R &  &  &  \\
				M &  &  & I \\
				M^* \otimes L &  &  & * \\
				L &  & I & * \\
			\end{block}
		\end{blockarray}
		\]
		where the block for $M \to M^* \otimes L$ is alternating. The differential $\delta_4$ is just the inclusion $C \hookrightarrow C\oplus L \oplus M$, and it is straightforward to see that $\delta_3 \delta_4$ and $\delta_3^* \delta_3$ are both zero. Furthermore, $I_{p+2}(\delta_3) = (1)$ and $\delta_4$ is a split inclusion, so we conclude that $\mb{B}$ is split exact.
	\end{proof}

	\subsection{Existence of a suitable $C \subset F_3$}
	In view of Theorem~\ref{thm:A->B}, it is natural to ask when there exists a choice of $C$ so that $\grade J_C \geq 4$ so that $\mb{B}$ is actually a resolution. This is certainly a restriction, e.g. such a $C$ obviously cannot exist if $\depth R = 3$, but it is only a mild one:
	\begin{prop}
		Let $R$ be a local Noetherian ring with infinite residue field. Suppose that $\mb{A}$ resolves a grade 3 ideal $I \subset R$. Then there exists a choice of $C$ so that $\grade J_C = 4$ if and only if $I$ is generically Gorenstein.
	\end{prop}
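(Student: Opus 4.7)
The plan is to reformulate both sides of the equivalence in terms of the Fitting ideal $I_{q-2}(d_3)$. The first step is to observe that $I$ is generically Gorenstein if and only if $I_{q-2}(d_3) \not\subseteq \mathfrak{p}$ for every minimal prime $\mathfrak{p}$ of $I$. Indeed, $\Ext^3(R/I,R) = \operatorname{coker}(d_3^*)$ is a shift of the canonical module of $R/I$, so at such a $\mathfrak{p}$ the ring $(R/I)_\mathfrak{p}$ is Gorenstein iff its canonical module is cyclic, iff $d_3 \otimes k(\mathfrak{p})$ has rank at least $q-2$, iff $I_{q-2}(d_3) \not\subseteq \mathfrak{p}$.

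The forward direction is then immediate. Cofactor expansion along the column of $d_3$ corresponding to $L$ shows $I_{q-1}(d_3) \subseteq J_C$, while $J_C \subseteq I_{q-2}(d_3)$ because every $(q-2)$-minor of $\delta_4$ is one of $d_3$. If $\grade J_C = 4$, then $J_C$ lies in no prime of grade $\leq 3$; in particular $I_{q-2}(d_3) \supseteq J_C$ avoids every minimal prime of $I$ (which has grade $3$ as $I$ is perfect of grade three), so $I$ is generically Gorenstein.

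For the converse, the plan is to use the infinite-residue-field hypothesis to make a generic choice. A rank-$(q-2)$ direct summand $C \subset A_3$ is equivalent to a split surjection $\eta\colon A_3 \to L = R$ with $C = \ker\eta$, i.e.\ to an element $\eta \in A_3^* \setminus \mathfrak{m} A_3^*$. At each minimal prime $\mathfrak{p}_i$ of $I$ ($i = 1,\ldots,n$), the generic Gorenstein hypothesis forces $K_i \coloneqq \ker(d_3 \otimes k(\mathfrak{p}_i))$ to have $k(\mathfrak{p}_i)$-dimension at most $1$. A direct rank calculation gives that $I_{q-2}(\delta_4) \not\subseteq \mathfrak{p}_i$ is equivalent to $(C \otimes k(\mathfrak{p}_i)) \cap K_i = 0$, which in the case $\dim K_i = 1$ reduces to $\eta(v_i) \neq 0$ in $k(\mathfrak{p}_i)$ for any generator $v_i$ of $K_i$. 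Since $V(J_C) \subseteq V(I_{q-1}(d_3)) = V(I)$ and the grade-$3$ primes in $V(I)$ are the minimal primes of $I$, producing a single $\eta$ satisfying this for all $i$ would give $\grade J_C \geq 4$.

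The main obstacle is constructing such an $\eta$, which I would handle by prime avoidance. Let $T_i = \ker \phi_i$, where $\phi_i\colon A_3^* \to k(\mathfrak{p}_i)$ is evaluation at $v_i$. Since $v_i \neq 0$, the map $\phi_i$ is nonzero, hence $T_i \subsetneq A_3^*$; Nakayama's lemma then gives $T_i + \mathfrak{m} A_3^* \subsetneq A_3^*$, so the image $\bar T_i \subset A_3^*/\mathfrak{m} A_3^* \cong k^{q-1}$ is a proper $k$-subspace. As $k$ is infinite and $q-1 \geq 2$, the finite union $\{0\} \cup \bigcup_i \bar T_i$ does not exhaust $k^{q-1}$, and any lift $\eta$ of a $\bar\eta$ in the complement has the required properties. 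Finally, to promote $\grade J_C \geq 4$ to equality rather than $J_C = R$, I would take $\mathbf{A}$ to be minimal so that entries of $d_3$ lie in $\mathfrak{m}$; then $J_C \subseteq \mathfrak{m}^{q-2} \subsetneq R$, and Theorem~\ref{thm:A->B} yields $\grade J_C = 4$.
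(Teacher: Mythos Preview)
Your argument is correct and takes a genuinely different route from the paper for the harder (converse) direction. The paper argues by contrapositive and pigeonhole: assuming every $C$ yields $\grade J_C = 3$, each $J_C$ is contained in some minimal prime of $I$; since there are only finitely many such primes, one shows that the set of lines $\bar\eta \in A_3^* \otimes k$ for which $J_{C_\eta}$ lies in a fixed minimal prime $\mathfrak{p}$ must span $A_3^* \otimes k$ for at least one $\mathfrak{p}$ (else $k^{q-1}$ would be a finite union of proper subspaces), and then lifting a spanning set gives $I_{q-2}(d_3) = \sum_i J_{C_i} \subseteq \mathfrak{p}$. You instead construct $\eta$ directly: at each minimal prime $\mathfrak{p}_i$ the condition $J_C \subseteq \mathfrak{p}_i$ translates via $\dim K_i = 1$ into the vanishing of a single linear functional on $A_3^*$, Nakayama pushes the kernel $T_i$ down to a proper $k$-subspace $\bar T_i$, and infinite-field avoidance produces $\bar\eta$ outside all of them. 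Your approach is more explicit---it identifies exactly which $\eta$ succeed---while the paper's avoids ever unpacking what $J_C \subseteq \mathfrak{p}_i$ means in terms of $d_3 \otimes k(\mathfrak{p}_i)$. Both arguments ultimately use the infinite residue field in the same way, and both tacitly identify the grade-$3$ primes in $V(I)$ with the minimal primes of $I$.

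One small quibble: your final sentence, replacing $\mb{A}$ by a minimal resolution to force $J_C \neq R$, modifies the given data. The paper does not address this point either; the cleanest reading of the statement is that one obtains $\grade J_C \geq 4$, with equality then automatic by Theorem~\ref{thm:A->B} whenever $J_C$ is proper.
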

	\begin{proof}
		Note that
		\[
			I_{q-1}(d_3) \subseteq J_C \subseteq I_{q-2}(d_3)
		\]
		where the first ideal equals $I$ up to radical and thus has grade 3. The vanishing of $I_{q-2}(d_3)$ cuts out the locus where $\Ext^3(R/I,R)$ cannot be generated by a single element, or equivalently the non-Gorenstein locus for $I$. Clearly $\grade J_C = 4$ implies $\grade I_{q-2}(d_3) \geq 4$ and thus $I$ is generically Gorenstein, establishing the ``only if'' implication.
		
		On the other hand, suppose that every choice of $C$ results in $\grade J_C = 3$. As $I$ is a grade 3 perfect ideal, its minimal primes all have grade 3, and consequently each $J_C$ has a minimal prime in common with $I$. Using that the residue field is infinite, a pigeonhole principle argument produces $C_1,\ldots,C_{q-1}$ which form a basis modulo $\mf{m}$ (as lines in $A_3^*$) and so that $J_{C_i}$ all share the same common minimal prime with $I$. But then $\sum_i J_{C_i} = I_{q-2}(d_3)$ would also have grade 3, showing that $I$ is not generically Gorenstein.
	\end{proof}
	
	\begin{remark}\label{rem:split-in-grade-two-AB}
		Regardless of whether $\grade J = 4$, the output complex $\mb{B}$ will always be split exact in grade 2 by Lemma~\ref{lem:A->B split case}. See also Remark~\ref{rem:split-in-grade-two-BA}.
	\end{remark}
	
	\section{From $\mb{B}$ to $\mb{A}$}\label{sec:grade four to three}
	We now discuss an inverse to the construction given in the preceding section. This is more subtle, and we will need to impose additional hypotheses on the ring $R$. For this section we will assume that $R$ is a unique factorization domain containing $1/2$. It would be interesting to see if a different approach could eliminate the need for this hypothesis.
	
	Throughout, we fix integers $p \geq 1$ and $q\geq 3$. Let $C = R^{q-2}$, $H = R^{p+2}$, and $B_1 = R^{p+q}$.
	First, suppose we have a complex of the form
	\[
		F_1^* \xto{\delta_3} H \oplus H^* \xto{\delta_3^*} F_1.
	\]
	Let $X$ be the part of $\delta_3$ mapping $F_1^* \to H$ and let $Y$ be the part of $\delta_3^*$ mapping $H \to F_1$. Then we have that
	\[
		\delta_3^* \delta_3 = YX + X^*Y^* = 0
	\]
	i.e. $YX$ is skew, though not necessarily alternating. In particular, if $Q$ is the quadratic form on $H \oplus H^*$ given by the evident pairing of the two summands, then for $v \in F_1^*$ we have
	\[
		2Q(v) = 2\langle Xv, Y^*v \rangle_{H} = \langle YXv, v \rangle_{F_1} + \langle v,(YX)^* v \rangle_{F_1} = 0
	\]
	where $\langle -,-\rangle_V$ denotes the evaluation pairing $V \otimes V^* \to R$. Thus if $1/2\in R$ then the image of $\delta_3$ is a (totally) isotropic subspace of $H\oplus H^*$.
	
	Now suppose we have a self-dual resolution
	\[
		0 \to B_0^* \xto{\delta_4} B_1^* \xto{\delta_3} H\oplus H^* \xto{\delta_1} B_1 \xto{\delta_0} B_0
	\]
	and we view the middle module as a (split) quadratic space with the form $Q$. The collection of isotropic summands $R^{p+2} \subset H \oplus H^*$ corresponds to the $R$-points of two copies of the orthogonal Grassmannian:
	\[
		Z = \OG(p+2,2p+4) \amalg \OG'(p+2,2p+4)
	\]
	which we will abbreviate as $\OG$ and $\OG'$. Localizing at $(0)$, the image of $\delta_3$ determines a $\operatorname{Frac}(R)$-point of $Z$; say it lands in $\OG$.
	
	For slightly clearer indexing, let $n = p+2$. We will henceforth assume that the isotropic summand $H^* \subset H\oplus H^*$ represents a point in the \emph{other} component $\OG'$. This is not a serious restriction: if $e_1,\ldots,e_n$ is a basis for $H$ and $e_1',\ldots,e_n' \in H^*$ is its dual basis, we can switch the component containing $H^*$ simply by exchanging $e_n$ with $e_n'$ for example.
	
	\subsection{Spinor structure on $\mb{B}$}
	A spinor structure on $\mb{B}$ will play the role that the DGA structure did in \S\ref{sec:grade three to four}. We refer the reader to \cite{Celikbas-Laxmi-Weyman-23} for details, as we will only give a brief summary here. The group $\Spin(H\oplus H^*)$ is a double cover of $\SO(H\oplus H^*)$, and it has two half-spinor representations
	\[
		V_\mathrm{even} = \bigwedge^\mathrm{even} H, \quad V_\mathrm{odd} = \bigwedge^\mathrm{odd} H.
	\]
	By our assumptions regarding $H^*$ and the image of $\delta_3$, we have Pl\"ucker embeddings $\OG \hookrightarrow \mb{P}(V_\mathrm{odd})$ and $\OG' \hookrightarrow \mb{P}(V_\mathrm{even})$.
	
	\begin{example}
		As discussed previously, the isotropic summand $H^*$ belongs to $\OG'$. Up to scale, its Pl\"ucker coordinates
		\[
			V_\mathrm{even}^* = R \oplus \bigwedge^2 H^* \oplus \cdots \to R
		\]
		are given by 1 on $R$ and 0 on all subsequent components. There is an open patch of $OG'$ given by the graphs of alternating maps $\varphi\colon H^* \to H$; the case of $H^*$ corresponds to $\varphi = 0$. Up to scale, the Pl\"ucker coordinates for such a point in $OG'$ are given by the $n \times n$ Pfaffians of $\varphi$ on the component $\bigwedge^n H^*$.
	\end{example}
	
	Just as how the Pfaffians are square-roots of certain minors, there are spinor coordinates on $\mb{B}$ which are square-roots of certain coordinates of a Buchsbaum-Eisenbud multiplier. Specifically, there is a map $\tilde{\mbf{a}}_3$ so that the diagram
	\[
	\begin{tikzcd}
		R \ar[r, "S_2(\tilde{\mbf{a}}_3)"] \ar[dr,"\mbf{a}_3",swap]& S_2(V_\mathrm{odd}) \ar[d, "\mbf{P}"] \\
		& \bigwedge^{p+2} (H \oplus H^*)
	\end{tikzcd}
	\]
	commutes up to scale by a unit, where $S_2$ denotes the symmetric square, $\mbf{a}_3$ is a map coming from the First Structure Theorem of \cite{Buchsbaum-Eisenbud74}, and $\mbf{P}$ is defined in \cite[Lemma~2.6]{Celikbas-Laxmi-Weyman-23}. We refer to $\tilde{\mbf{a}}_3$ as the spinor coordinates for $\mb{B}$. Note that this map is only up to scale given $\mb{B}$; we a choice.
	
	\begin{remark}
		While \cite[Theorem 4.2]{Celikbas-Laxmi-Weyman-23} assumes that $R$ is regular, this is not necessary, and actually there is a minor issue in the proof since the squares of the spinor coordinates are only equal to coordinates of the Buchsbaum-Eisenbud multiplier up to scale.
		
		Here is a quick argument for why the spinor coordinates exist assuming $R$ is factorial. Let $Z\subset \Spec R$ denote the support of the module resolved by $\mb{B}$, and let $U$ be its complement. Over $U$, the image of $\delta_3$ is an isotropic summand of rank $n$, thus we get a map $U \to \OG\hookrightarrow \mb{P}(V_\mathrm{odd})$. Thus we get spinor coordinates on $U$ which take values in some line bundle, but $\operatorname{Pic}(U) = 0$ because $R$ is factorial. Finally, using the fact that $\grade Z \geq 2$, we know that the restriction map $R \to \Gamma(U, \mc{O}_X)$ is an isomorphism, so the spinor coordinates are defined over $R$. 
	\end{remark}
	
	Having done this, we define one more map $w\colon V_\mathrm{even}^* \to B_1^*$ as follows. Consider the map $V_\mathrm{odd} \otimes (H\oplus H^*) \to V_\mathrm{even}$ given by linearly extending the formula
	\[
		e_1 \wedge \cdots \wedge e_j \otimes (f, f') \mapsto f \wedge e_1 \wedge \cdots \wedge e_j + \sum_i (-1)^{i-1} \langle e_i, f' \rangle e_1 \wedge \cdots \wedge \hat{e}_i \wedge \cdots \wedge e_j
	\]
	for $e_i, f\in H$ and $f'\in H^*$, where $\hat{e}_i$ means to omit $e_i$. Precomposing this map with $\tilde{\mbf{a}}_3 \otimes 1$ yields a map $H \oplus H^* \to V_\mathrm{even}$, whose dual is the diagonal arrow below:
	\begin{equation}\label{eq:w lift}
	\begin{tikzcd}
		0 \ar[r] & B_0^* \ar[r] & B_1^* \ar[r] & H \oplus H^* \ar[r] & B_1\\
		&&V_\mathrm{even}^* \ar[u,dashed] \ar[ur] 
	\end{tikzcd}
	\end{equation}
	We define $w$ to be a lift of this map indicated by the dashed arrow. Any two choices of $w$ differ by $\delta_4 b$ for some $b\colon V_\mathrm{even}^* \to B_0^*$. To prove that such a lift $w$ exists, it is equivalent to check that the image of the diagonal map is a cycle in $H\oplus H^*$. Since this is an equational condition, it can be verified over any dense subset of $\Spec R$. So it is enough to check this in the event that $\mb{B}$ is split exact, which we show now.
	
	\begin{example}\label{ex:split B 1}
		Suppose that $\mb{B}$ is split exact. Let $e_1,\ldots,e_n$ be a basis of $H$ and $e_1',\ldots,e_n' \in H^*$ its dual. Using the action of $\SO(H \oplus H^*)$, we can arrange so that the image of $\delta_3$ is the span of $e_1, e_2',\ldots,e_n'$. Let $m_1,\ldots,m_n$ be a basis of $M$ so that $m_1 \mapsto e_1$ and $m_i \mapsto e_i'$ for $i \geq 2$ below:
		\[
			\mb{B}\colon 0 \to B_0^* \to B_0^* \oplus M \to H \oplus H^* \to \cdots
		\]
		In this situation, the spinor coordinates
		\[
			H^* \oplus \bigwedge^3 H^* \oplus \cdots \xto{\tilde{\mbf{a}}_3^*}  R
		\]
		send $e_1' \in H^*$ to a unit and all other wedge products to zero. We fix the choice that sends $e_1' \mapsto 1$. The diagonal map in \eqref{eq:w lift} then sends
		\begin{align*}
			R \ni 1 &\mapsto e_1 \in H\\
			\bigwedge^2 H^* \ni e_i' \wedge e_1' &\mapsto e_i' \in H^*
		\end{align*}
		and all other wedge products to zero. As such, an evident lift to $B_1^*$ is given by $w$ sending $1 \mapsto m_1$ and $e_i' \wedge e_1' \mapsto m_i$, and all other lifts have the form $w + \delta_4 b$.
	\end{example}
	\begin{example}\label{ex:split B 2}
		As a slight modification to the preceding example, suppose that $\varphi\colon H^* \to H$ is an alternating map, and we instead have $\delta_3(m_1) = e_1$ and $\delta_3(m_i) = e_i' + \varphi(e_i')$ for $i \geq 2$. The higher components of $\tilde{\mbf{a}}_3^*$ and $w$ will change, involving various Pfaffians of $\varphi$. However, their restrictions to the bottom components $H^*$ and $R$ respectively remain the same.
	\end{example}
	
	\begin{lem}\label{lem:singleton-unit}
		Suppose that $R$ is local. Let $H = R^n$, and let $M\cong R^n$ be an isotropic summand of $H \oplus H^*$ that does not belong to the same orbit as $H^*$, so that it has spinor coordinates
		\[
			\lambda \colon \bigwedge^\mathrm{odd} H^* \to R.
		\]
		The following are equivalent:
		\begin{enumerate}
			\item There exists a basis $e_1,\ldots,e_n$ of $H$, dual basis $e_1',\ldots,e_n'$ of $H^*$, and a $(n-1)\times(n-1)$ skew matrix $X$ so that $M$ is spanned by the columns of the matrix
			\[
			\begin{blockarray}{ccc}
				\begin{block}{c[cc]}
					e_1' & 0 & 0\\
					\Span(e_2,\ldots,e_n) & 0 & X\\
					e_1 & 1 & 0\\
					\Span(e_2',\ldots,e_n') & 0 & I\\
				\end{block}
			\end{blockarray}
			\]
			\item $\lambda(H^*) = (1)$.
			\item If $P$ denotes the projection of $M$ onto $H^*$, then $I_{n-1}(P) = (1)$.
		\end{enumerate}
	\end{lem}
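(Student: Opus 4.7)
The plan is to establish the three implications $(1) \Rightarrow (3)$, $(3) \Rightarrow (1)$, and the equivalence $(1) \Leftrightarrow (2)$. The forward implications from $(1)$ are direct matrix-level computations: in the basis of $(1)$, the matrix of $P$ with respect to the displayed generators of $M$ and the dual basis of $H^*$ is $\left(\begin{smallmatrix} 0 & 0 \\ 0 & I_{n-1}\end{smallmatrix}\right)$, whose lower-right minor equals $1$, so $I_{n-1}(P) = R$, giving $(1) \Rightarrow (3)$. For $(1) \Rightarrow (2)$, the setup of $(1)$ is precisely that of Example~\ref{ex:split B 2} with $\varphi$ taken to be the extension of $X$ by zero on $e_1'$ (producing an alternating $\varphi\colon H^* \to H$); that example computes $\tilde{\mbf{a}}_3^*(e_1') = 1$, so $\lambda(e_1') = 1$ and $\lambda(H^*) = R$.

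For $(3) \Rightarrow (1)$, working over the local ring $R$, the hypothesis gives a unit $(n-1)\times(n-1)$ minor of $P$. After reordering basis elements and performing elementary row and column operations (basis changes on $M$ and on $H^*$, with the dual change on $H$), the matrix of $P$ reduces to $\left(\begin{smallmatrix} a & 0 \\ 0 & I_{n-1}\end{smallmatrix}\right)$ for some $a \in R$. The crucial step is to show $a = \det(P) = 0$: this is because $M$ lies in the component of $\OG(n,2n)$ opposite to $H^*$, on which $M$ is never transverse to $H$, since any transverse $M$ would be the graph of an alternating map $H^* \to H$ and thus lie in the same component as $H^*$. Equivalently, the Pl\"ucker coordinate $\det P$ vanishes identically on the opposite component. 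So $a = 0$, and the first generator $m_1$ lies in $H$ and is unimodular; we take $e_1 := m_1$, extend to a basis $e_1, \ldots, e_n$ of $H$ with dual basis $e_1', \ldots, e_n'$ of $H^*$, and replace each remaining $m_i$ by $m_i - \alpha_i m_1$ to zero out its $e_1$-coefficient. Isotropy of $M$ then forces the resulting $(n-1) \times (n-1)$ block in $\Span(e_2, \ldots, e_n)$ to be skew, yielding form $(1)$.

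For $(2) \Rightarrow (1)$, since $R$ is local, $\lambda(H^*) = R$ implies that some $\lambda(e_j')$ is a unit; after relabeling we may assume $\lambda(e_1') \in R^\times$. This places $[M]$ in the affine chart of $\OG(n,2n)$ parametrized by Example~\ref{ex:split B 2}, exhibiting $M$ as the span of $e_1$ together with $e_i' + \varphi(e_i')$ for $i \geq 2$, where $\varphi \colon H^* \to H$ is alternating; absorbing the $e_1$-components of the $\varphi(e_i')$ into $m_1$ as in the previous paragraph produces the form $(1)$. The main obstacle is the vanishing $a = 0$ in $(3) \Rightarrow (1)$, which relies on knowing that the two components of $\OG(n,2n)$ are distinguished precisely by the vanishing of the top Pl\"ucker coordinate $\det P$; a parallel subtlety arises in $(2) \Rightarrow (1)$, where one must verify that the parametrization of Example~\ref{ex:split B 2} surjects onto the full chart $\{\lambda(e_1') \in R^\times\}$.
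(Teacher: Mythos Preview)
Your overall shape matches the paper's: $(1)\Rightarrow(2)$, $(1)\Rightarrow(3)$ are routine, and the content lies in $(3)\Rightarrow(1)$ and $(2)\Rightarrow(1)$. But both of your ``hard'' directions have real gaps.

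\textbf{The step $a=\det P=0$ in $(3)\Rightarrow(1)$.} Your argument ``any transverse $M$ would be the graph of an alternating map $H^*\to H$ and thus lie in the same component as $H^*$'' only shows that $\det P$ is \emph{not a unit}; it does not give $\det P=0$ over a general local ring. Your sentence ``equivalently, $\det P$ vanishes identically on the opposite component'' is a separate scheme-theoretic claim that you have not justified. The paper avoids this by tracking the full inclusion matrix, not just $P$: after reducing $P$ to $\left(\begin{smallmatrix}b&0\\0&I\end{smallmatrix}\right)$, isotropy forces the $H$-block to have the shape $\left(\begin{smallmatrix}a&\mathbf{u}\\-b\mathbf{u}^\top&X\end{smallmatrix}\right)$ with $ab=0$ and $X$ skew. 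The component hypothesis gives only that $b$ is a non-unit; since the first column $(b,-b\mathbf{u}^\top,a,0)$ generates the ideal $(a,b)=R$ and $R$ is local, $a$ must be a unit, and then $ab=0$ forces $b=0$. This is more elementary and works without any reducedness assumption on $R$. Note also that once you know $b=0$ the isotropy relation already pins $m_1$ down as a unit multiple of $e_1$, so your subsequent ``take $e_1:=m_1$ and extend to a basis'' step is unnecessary.

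\textbf{The step $(2)\Rightarrow(1)$.} Here there is a genuine error. After relabeling so that $\lambda(e_1')$ is a unit, you assert that $M$ lies in the chart parametrized by Example~\ref{ex:split B 2}. But that family (for a fixed basis of $H$) only covers the locus where $\lambda(e_1')$ is a unit \emph{and} $\lambda(e_i')=0$ for $i\neq 1$; the open set $\{\lambda(e_1')\in R^\times\}$ is strictly larger. So the surjectivity you flag as needing verification is in fact false. The paper's fix is the very first sentence of its proof: since $\lambda|_{H^*}\colon H^*\to R$ is surjective, one may choose the basis $e_1,\ldots,e_n$ of $H$ (not merely relabel) so that $\lambda(e_i')=\delta_{i1}$. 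With that done, the paper switches to the isotropic pair $H'=\Span(e_1',e_2,\ldots,e_n)$, $H'^*=\Span(e_1,e_2',\ldots,e_n')$ and identifies $\bigwedge^{\mathrm{odd}}H^*\cong\bigwedge^{\mathrm{even}}H'^*$; then $\lambda(e_1')$ a unit says $M$ is the graph of a skew map $\psi\colon H'^*\to H'$, and the extra conditions $\lambda(e_i')=0$ translate exactly into the vanishing of the first row and column of $\psi$, which is precisely form~(1).
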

	\begin{proof}
		Let us first expand on the condition $\lambda(H^*) = (1)$. It is equivalent to say that there exists a basis $e_1,\ldots,e_n$ of $H$ (and corresponding dual basis of $H^*$) so that $\lambda(e_1')$ is a unit, and $\lambda(e_i') = 0$ for all $i \neq 1$. Now consider $H' = \Span(e_1',e_2,\ldots,e_n)$ and $H'^* = \Span(e_1,e_2',\ldots,e_n')$. We have that
		\[
			\bigwedge^\mathrm{odd} H^* = \bigwedge^\mathrm{even} H'^*
		\]
		where $e_1',\ldots,e_n' \in H^*$ on the left correspond to $1, e_1 \wedge e_2',\ldots,e_1 \wedge e_n'$ on the right. The coordinate $\lambda(1)$ being a unit says that $M$ belongs to the big open cell consisting of isotropic summands that are given by the graph of a skew matrix $H'^* \to H'$. The conditions $\lambda(e_1 \wedge e_i') = 0$ for $i \neq 1$ say that the row and column of this skew matrix corresponding to $e_1$ and $e_1'$ are equal to zero. This establishes the equivalence of (1) and (2).
		
		The implication (1)$\implies$(3) is immediate. For the reverse, there is a basis $e_1,\ldots,e_n$ of $H$ (and corresponding dual basis of $H^*$) so that $M$ is spanned by the columns of the matrix
		\[
		\begin{blockarray}{ccc}
			\begin{block}{c[cc]}
				e_1' & b & 0\\
				\Span(e_2,\ldots,e_n) & -b\mbf{u}^\top & X\\
				e_1 & a & \mbf{u}\\
				\Span(e_2',\ldots,e_n') & 0 & I\\
			\end{block}
		\end{blockarray}
		\]
		where $ab=0$ and $X$ is skew. Our assumption that $M$ belongs to the component of isotropics not containing $H^*$ implies that $b$ cannot be a unit. But the ideal generated by the first column is the unit ideal, so $a$ must be a unit. We may rescale the first column so that $a=1$. Then $b = 0$ and we can set $\mbf{u}=0$ using column operations, establishing (1).
	\end{proof}
	
	\subsection{The construction}\label{subsec:A-from-B}
	Using the original differentials $\delta$ together with the maps $\tilde{\mbf{a}}_3$ and $w$, we define the differentials of a complex
	\[
		\mb{A} \colon 0 \to B_0^* \oplus R \to B_1^* \to H^* \to R.
	\]
	\begin{itemize}
		\item The differential $d_3$ is $\begin{bmatrix}
			\delta_4 & w|_R
		\end{bmatrix}$ viewing $R$ as $\bigwedge^0 H^* \subset V_\mathrm{even}^*$.
		\item The differential $d_2$ is just the dual of $\delta_2$ restricted to $H$.
		\item The differential $d_1$ is $\tilde{\mbf{a}}_3^*$ restricted to $H^* \subset V_\mathrm{odd}^*$.
	\end{itemize}
	Let $I = I_H$ denote the ideal generated by the entries of $d_1$. 
	
	\begin{lem}\label{lem:A complex}
		As constructed above, $\mb{A}$ is a complex.
	\end{lem}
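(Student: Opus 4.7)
The plan is to show that both relations $d_2 d_3 = 0$ and $d_1 d_2 = 0$ reduce to a single key computation: that the lift $w$ satisfies $\delta_3(w|_R(1)) = (a_1, 0) \in H \oplus H^*$, where $a_1 \in \bigwedge^1 H = H$ denotes the linear component of $\tilde{\mbf{a}}_3 = a_1 + a_3 + \cdots$. To establish this, I would unpack the diagonal map in \eqref{eq:w lift}. That diagonal is the dual of $\mu\colon H \oplus H^* \to V_\mathrm{even}$, $(f,f')\mapsto f\wedge \tilde{\mbf{a}}_3 + \iota_{f'}\tilde{\mbf{a}}_3$. Pairing $\mu(f,f')$ with $1 \in R = \bigwedge^0 H^* \subset V_\mathrm{even}^*$ extracts the $\bigwedge^0 H$ coefficient: the summand $f \wedge \tilde{\mbf{a}}_3$ lies in $\bigwedge^{\geq 2} H$ and contributes nothing, while $\iota_{f'}\tilde{\mbf{a}}_3$ contributes $\iota_{f'} a_1 = \langle a_1, f'\rangle$. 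Thus $\mu^*|_R(1) \in (H\oplus H^*)^*$ is the functional $(f,f')\mapsto \langle a_1, f'\rangle$, which under the quadratic-form identification $B_2^* \cong B_2$ corresponds to $(a_1, 0) \in H \oplus H^*$. Since $w$ is a lift, $\delta_3(w|_R(1)) = (a_1, 0)$.

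Once this is established, the two complex identities follow directly from $\delta_3\delta_4 = 0$ and $\delta_2\delta_3 = 0$. For $d_2 d_3 = 0$: the composition $d_2 d_3 = (\delta_2|_H)^* \circ [\delta_4,\; w|_R]$ is the $H^*$-component of $\delta_3 \circ [\delta_4,\; w|_R]$ (using $\delta_3 = \delta_2^*$). The $\delta_4$-column is the $H^*$-component of $\delta_3 \delta_4 = 0$, and the $w|_R$-column is the $H^*$-component of $(a_1, 0) \in H\oplus H^*$, which is zero. For $d_1 d_2 = 0$: dualizing (with $d_1^*\colon 1\mapsto a_1$ and $d_2^* = \delta_2|_H$) the claim becomes $\delta_2(a_1, 0) = 0$ in $B_1$, and this is immediate since $(a_1, 0) = \delta_3(w|_R(1))$ lies in the image of $\delta_3$.

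I expect the main obstacle to be the careful bookkeeping around the self-duality identification $B_2 \cong B_2^*$, specifically the interchange between the dual decomposition $(H\oplus H^*)^* = H^* \oplus H$ and the presentation $H\oplus H^*$ induced by the quadratic form. Matching $\mu^*|_R(1)$ with $(a_1, 0) \in H\oplus H^*$ rather than $(0, a_1) \in H^* \oplus H$ hinges on getting this identification right. Once that sign/swap convention is pinned down, the rest is a direct consequence of the complex relations for $\mb{B}$ and the definition of $w$; in particular, unlike Lemma~\ref{lem:B complex}, no reduction to a split case is needed. As a sanity check I would verify the computation against Example~\ref{ex:split B 1}, where indeed $a_1 = e_1$, $w|_R(1) = m_1$, and $\delta_3(m_1) = e_1 = (a_1, 0)$.
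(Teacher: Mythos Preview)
Your argument is correct and takes a genuinely different route from the paper's proof. The paper argues that it suffices to check $d_2d_3 = 0$ and $d_1d_2 = 0$ at primes of grade zero; there $\mb{B}$ becomes split exact, and after acting by a general element of the spin group the ideal $I$ becomes the unit ideal, so one is reduced to the situation of Lemma~\ref{lem:B->A split case}. In contrast, you extract the single identity $\delta_3(w|_R(1)) = (a_1,0)$ directly from the Clifford-action formula defining the diagonal map in \eqref{eq:w lift}, and then observe that both complex relations follow immediately from this together with $\delta_3\delta_4 = 0$ and $\delta_2\delta_3 = 0$.

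Your approach is more transparent: it explains structurally why $\mb{A}$ is a complex and avoids the somewhat informal ``act by a general element of the spin group'' step. The paper's approach has the virtue of uniformity with the proof of Lemma~\ref{lem:B complex} and reuses Lemma~\ref{lem:B->A split case}, while sidestepping the self-duality bookkeeping you correctly flag as the only delicate point (and which your sanity check against Example~\ref{ex:split B 1} would indeed resolve). Your closing remark that no reduction to a split case is needed here is apt; the paper itself notes after Lemma~\ref{lem:B complex} that a direct argument of this flavor is possible there too.
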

	\begin{proof}
		It suffices to check this at primes $\mf{p}$ of grade zero. Localizing at such a prime, $\mb{B}$ becomes split and the ideal $I_1(\tilde{\mbf{a}}_3)$ generated by \emph{all} the spinor coordinates becomes the unit ideal. Over this localization, acting by a general element of the spin group results in $I = I_1(\tilde{\mbf{a}}_3^*|_{H^*})$ being the unit ideal, so we may reduce further to this setting where the claim is addressed by Lemma~\ref{lem:B->A split case}.
	\end{proof}
	
	\begin{thm}\label{thm:B->A}
		Suppose that $\grade I \geq 3$. Then $\mb{A}$ is an acyclic complex. In particular, either $\grade I = 3$ or $I=(1)$.
	\end{thm}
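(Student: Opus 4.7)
The plan is to mimic the proof of Theorem~\ref{thm:A->B}, leveraging the split-case Lemma~\ref{lem:B->A split case} together with the reduction machinery of \S\ref{sec:bg-ffrBE}. Since $\mb{A}$ has length three, the Buchsbaum-Eisenbud acyclicity criterion reduces the claim to verifying that $\mb{A} \otimes R_\mf{p}$ is split exact at every prime $\mf{p}$ with $\grade \mf{p} \leq 2$. So I would fix such a $\mf{p}$ and analyze how the construction behaves under localization.

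The first key observation is that $\mb{B} \otimes R_\mf{p}$ is automatically split exact at such primes: since $\mb{B}$ is a length-four acyclic resolution of a grade-four module, that module vanishes after inverting anything outside a prime of grade $<4$. The second key observation is that the grade hypothesis $\grade I \geq 3$ ensures $I \not\subset \mf{p}$, hence $I R_\mf{p} = R_\mf{p}$, i.e.\ the restricted spinor coordinates $\tilde{\mbf{a}}_3^*|_{H^*}$ generate the unit ideal over $R_\mf{p}$. With these two observations in hand, the localized data $(\mb{B}, \tilde{\mbf{a}}_3, w) \otimes R_\mf{p}$ falls squarely within the hypotheses of Lemma~\ref{lem:B->A split case}, which delivers the desired split exactness. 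This is entirely parallel to how the proof of Lemma~\ref{lem:A complex} handles the grade-zero case, but now the stronger grade hypothesis lets us push through grade two without needing any spin-group genericity argument.

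For the ``in particular'' clause, once acyclicity of $\mb{A}$ is known I would apply Buchsbaum-Eisenbud in reverse: acyclicity automatically gives $\grade I_{q-1}(d_3) \geq 3$ and $\grade I_{p+1}(d_2) \geq 2$, and the hypothesis $\grade I_1(d_1) = \grade I \geq 3$ then satisfies the dual criterion, so $\mb{A}^*$ is also acyclic. Consequently $R/I$ is perfect of projective dimension equal to its grade, which is either $3$ (when $I \neq R$) or $\infty$ (when $I=R$, in which case $\mb{A}$ is split exact).

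The step I expect to be the main ``obstacle'' is really the verification that we are indeed in the hypotheses of Lemma~\ref{lem:B->A split case}; everything else is a formal consequence of the acyclicity criterion and the behavior of $\mb{B}$ under localization. Since that lemma is where the nontrivial explicit calculation lives (handling the spinor coordinates $\tilde{\mbf{a}}_3$ and the lift $w$ in the split model as in Examples~\ref{ex:split B 1} and \ref{ex:split B 2}), the present theorem itself is essentially a packaging of the split case, analogous to how Theorem~\ref{thm:A->B} packages Lemma~\ref{lem:A->B split case}.
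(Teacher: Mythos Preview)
Your proposal is correct and follows essentially the same approach as the paper's proof: reduce via \S\ref{sec:bg-ffrBE} to primes of grade at most two, observe that there $\mb{B}$ becomes split exact and $I$ becomes the unit ideal, and invoke Lemma~\ref{lem:B->A split case}. Your write-up is simply more explicit about why both reductions hold and about the ``in particular'' clause, but the argument is the same.
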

	\begin{proof}
		Since $\mb{A}$ has length three, it suffices to prove this over localizations $R_\mf{p}$ with $\grade \mf{p} \leq 2$. Localizing at such a prime, we reduce at once to the case that $\mb{B}$ is split exact and $I = (1)$, so we finish using Lemma~\ref{lem:B->A split case}.
	\end{proof}
	
	\begin{lem}\label{lem:B->A split case}
		Suppose that $\mb{B}$ is split exact and $I= (1)$. Then $\mb{A}$ is split exact.
	\end{lem}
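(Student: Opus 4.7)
The plan is to exploit the hypothesis $I = (1)$ to put $\mb{B}$ into an especially explicit normal form via Lemma~\ref{lem:singleton-unit}, then read off $\mb{A}$ in coordinates and observe that it is manifestly split exact.

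First, since $\mb{B}$ is split exact the image $M$ of $\delta_3$ is a rank-$n$ isotropic summand of $B_2 = H \oplus H^*$, where $n = p+2$. The ideal $I$ is the image of $d_1 = \tilde{\mbf{a}}_3^*|_{H^*} \colon H^* \to R$, which is, up to scale, the part of the spinor coordinates of $M$ that pairs with $H^*$. Hence $I = (1)$ means the projection $M \to H^*$ has an $(n-1)\times(n-1)$ minor that is a unit. By Lemma~\ref{lem:singleton-unit} (condition (3)$\Rightarrow$(1)) we can therefore pick bases $e_1,\ldots,e_n$ of $H$ and the dual basis $e_1',\ldots,e_n'$ of $H^*$, together with a compatible basis $m_1,\ldots,m_n$ of $M \subset B_1^*$, so that $\delta_3(m_1) = e_1$ and $\delta_3(m_i) = e_i' + X(e_i')$ for $i \geq 2$, where $X$ is an alternating map $\Span(e_2',\ldots,e_n') \to \Span(e_2,\ldots,e_n)$. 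We are then exactly in the situation of Example~\ref{ex:split B 2}.

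Second, I would read off the three differentials of $\mb{A}$ in these coordinates. By Example~\ref{ex:split B 2}, the restrictions that define $d_1$ and $w|_R$ agree with those of Example~\ref{ex:split B 1}: $d_1(e_1') = 1$ and $d_1(e_i') = 0$ for $i \geq 2$, while $w|_R$ sends $1 \mapsto m_1$ up to a correction in $\delta_4(B_0^*)$. The differential $\delta_4$ is the standard inclusion $B_0^* \hookrightarrow B_1^* = B_0^* \oplus M$. For $d_2 = (\delta_2|_H)^*$ one uses the identification $B_2 \cong B_2^*$ supplied by the hyperbolic form to compute $\delta_2$ as $\delta_3^*$; a direct calculation yields $\delta_2(e_1) = 0$ and $\delta_2(e_j) = m_j^*$ for $j \geq 2$, the key observation being that the $X$-contribution to $\delta_3(m_i)$ sits in $H^*$, so it shows up only in $\delta_2|_{H^*}$ and is invisible to $d_2$. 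Dualizing, $d_2$ annihilates $B_0^* \oplus \Span(m_1)$ and sends $m_j \mapsto e_j'$ for $j \geq 2$.

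Finally, split exactness of $\mb{A}$ is immediate from these formulas. After column operations eliminating the $\delta_4 b$ correction, $d_3$ becomes the split inclusion $B_0^* \oplus R \hookrightarrow B_0^* \oplus \Span(m_1) \subset B_0^* \oplus M$; its image coincides with $\ker d_2 = B_0^* \oplus \Span(m_1)$; the image of $d_2$ is $\Span(e_2',\ldots,e_n') = \ker d_1$; and $d_1$ is split surjective since $d_1(e_1') = 1$. Each stage is therefore a split short exact sequence, so $\mb{A}$ is split exact. The only mildly delicate step is computing $d_2$ through the self-duality identification $B_2 \cong B_2^*$ and confirming that the alternating parameter $X$ does not contribute; once this is verified the remainder of the argument is a matter of inspecting block matrices.
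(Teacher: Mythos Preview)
Your proof is correct and follows essentially the same route as the paper's: invoke Lemma~\ref{lem:singleton-unit} to put $\mb{B}$ into the normal form of Example~\ref{ex:split B 2}, read off $d_1,d_2,d_3$ explicitly, and observe that the resulting complex is split exact by inspection. Two small remarks: (i) the hypothesis $I=(1)$ is literally condition~(2) of Lemma~\ref{lem:singleton-unit}, so you may cite $(2)\Rightarrow(1)$ directly rather than passing through~(3); (ii) in your parenthetical about $d_2$, the $X$-contribution $X(e_i')$ sits in $H$, not $H^*$---but your conclusion that it is invisible to $\delta_2|_H$ (and hence to $d_2$) is correct, since under the hyperbolic pairing $H$ pairs with $H^*$.
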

	
	\begin{proof}
		By Lemma~\ref{lem:singleton-unit}, there exists a basis $e_1,\ldots,e_n$ of $H$ and an alternating map $\varphi\colon H^* \to H$ so that the image of $\delta_3$ is the span of $e_1$ and $e_i' + \varphi(e_i')$ for $i \geq 2$, where $e_1',\ldots,e_n' \in H^*$ is the dual basis. In this setting, the maps $\tilde{\mbf{a}}_3$ and $w$ are described in Example~\ref{ex:split B 2}, so we may describe the differentials of $\mb{A}$ explicitly, using the same notation as in Example~\ref{ex:split B 1}:
		\[
		d_3 =\begin{blockarray}{cccc}
			& B_0^* & R \\[5pt]
			\begin{block}{c[ccc]}
				B_0^* &I& * \\
				m_1 && 1 \\
				m_2 &&  \\
				\vdots &&  \\
				m_n && \\
			\end{block}
		\end{blockarray},\quad d_2 =\begin{blockarray}{cccccc}
		& B_0^* & m_1 & m_2 & \cdots & m_n \\[5pt]
		\begin{block}{c[ccccc]}
			e_1' &&  \\
			e_2' &&& 1 \\
			\vdots &&&& \ddots \\
			e_n' &&&&& 1\\
		\end{block}
		\end{blockarray}
		\]
		\[
		d_1 =\begin{blockarray}{ccccc}
			& e_1' & e_2' & \cdots & e_n' \\[5pt]
			\begin{block}{c[cccc]}
				R &1& 0 & \cdots &0 \\
			\end{block}
		\end{blockarray}
		\]
		Here the unspecified block in $d_3$ marked with an asterisk comes from the choice of $b$ mentioned at the end of Example~\ref{ex:split B 1}. This complex is evidently split exact, as claimed.
	\end{proof}
	
	\begin{remark}\label{rem:same-radical}
		If $\mb{B}$ is acyclic, then the ideals $I$ and $I_{n-1}(d_2)$ have the same radical. To see this, it suffices to verify they have the same support. It is enough to check this on the complement of the support of $H_0(\mb{B})$, as this set is evidently contained in both. But on the complement, $\mb{B}$ becomes split exact, and the claim then follows from Lemma~\ref{lem:singleton-unit}.
		
		In particular, if $d_2$ is the middle differential of a complex resolving a grade 3 perfect ideal, then necessarily $\grade I_{n-1}(d_2) = 3$. By the preceding, the hypothesis of Theorem~\ref{thm:B->A} holds, and the complex must be isomorphic to $\mb{A}$. This is helpful in practice, as $d_2$ is easier to compute than $d_1$ given $\mb{B}$.
	\end{remark}
	
	\subsection{Existence of a suitable $H$}
	
	\begin{lem}\label{lem:H-prime-avoidance}
		Let $R$ be a local Noetherian ring with infinite residue field, and suppose we have a map
		\[
			\lambda \colon V_\mathrm{odd}^* = H^* \oplus \bigwedge^3 H^* \oplus \cdots \to R
		\]
		whose image is an ideal of grade $\geq 3$. Then, precomposing $\lambda$ by an appropriate element of the spin group acting on $V_\mathrm{odd}^*$, we can arrange for $\grade \lambda(H^*) \geq 3$.
	\end{lem}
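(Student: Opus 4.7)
The plan is to use a prime-avoidance argument driven by the irreducibility of the half-spin representation. The crucial input is that for any field $\kappa$, the spin group $\Spin_\kappa(H \oplus H^*)$ acts irreducibly on $V_{\mathrm{odd}}^* \otimes \kappa$. Consequently, for any nonzero $\kappa$-linear functional $\mu$ on $V_{\mathrm{odd}}^* \otimes \kappa$, the locus $\{g \in \Spin_\kappa : \mu|_{g(H^* \otimes \kappa)} \equiv 0\}$ is a proper Zariski-closed subvariety: if all of $g(H^* \otimes \kappa)$ lay in $\ker \mu$ as $g$ varies, irreducibility would force $V_{\mathrm{odd}}^* \otimes \kappa \subset \ker \mu$, contradicting $\mu \neq 0$.

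This at once handles single-prime avoidance. For any $\mf{p} \subset R$ with $I := \lambda(V_{\mathrm{odd}}^*) \not\subset \mf{p}$, the reduction $\bar{\lambda}_\mf{p}\colon V_{\mathrm{odd}}^* \otimes \kappa(\mf{p}) \to \kappa(\mf{p})$ is nonzero, so ``$\lambda(g(H^*)) \not\subset \mf{p}$'' defines a nonempty Zariski-open subset of $\Spin_R$ (the fiber over $\mf{p}$ is nonempty-open by the above, and one lifts to $R$-points using smoothness of the spin group scheme).

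I would then assemble these conditions into $\grade \lambda(g(H^*)) \geq 3$ by building a regular sequence of length three iteratively. Since $\Ass(R)$ is finite, intersecting the finitely many open conditions arising from its members produces a nonempty open locus of $g$'s such that $\lambda(g(H^*))$ avoids every $\mf{p} \in \Ass(R)$; infinite-residue-field prime avoidance inside $H^*$ then collapses this to a single $v_1 \in H^*$ with $x_1 := \lambda(g(v_1))$ a non-zerodivisor. Iterating with $R/(x_1)$ and then $R/(x_1,x_2)$ (each having only finitely many associated primes) gives $v_2, v_3$ with the desired property.

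The main obstacle is that when we perturb $g$ in the second iteration to avoid the associated primes of $R/(x_1)$, the value of $x_1$ itself changes, so the set of primes being avoided shifts with it. To handle this cleanly I would work generically: pass to the smooth $R$-algebra $S$ representing (an affine open of) $\Spin_R$ with its tautological point $g_{\mathrm{gen}}$, consider $J_{\mathrm{gen}} := \lambda(g_{\mathrm{gen}}(H^* \otimes S)) \subset S$, and prove $\grade_S J_{\mathrm{gen}} \geq 3$ by carrying out the three-step iteration once and for all over $S$ (where the candidate regular-sequence elements are honest elements of $S$, not dependent on any specialization). A final prime-avoidance specialization of $g_{\mathrm{gen}}$ over the fiber at $\mf{m}$, enabled by the infinite residue field, then produces the required $g \in \Spin_R(R)$.
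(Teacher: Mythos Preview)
Your proposal has a real gap: the passage to the universal $S$ does not dissolve the moving-target problem you correctly identified. The first step of the iteration over $S$ does work, because the associated primes of $S$ (smooth over $R$) are the generic points of fibers $\Spin_{\kappa(\mf p)}$ over associated primes $\mf p$ of $R$, and irreducibility of the half-spin representation places each such generic point outside the bad locus $\{g:\bar\lambda|_{g(H^*)}\equiv 0\}$. But once you choose a nonzerodivisor $x_1\in J_{\mathrm{gen}}$, the associated primes of $S/(x_1)$ are no longer generic in their fibers---$x_1$ mixes fiber and base directions---so the corresponding residue point $\bar g_{\mf q}$ may well lie inside the bad locus, and irreducibility alone no longer yields $J_{\mathrm{gen}}\not\subset\mf q$. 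To push the iteration through you would need the bad locus to have codimension $\geq 3$ in every fiber, a geometric fact about the $\Spin$-orbit of $[H^*]$ in $\Gr(n,V_{\mathrm{odd}}^*)$ that you have not supplied and that is not a formal consequence of irreducibility. The same difficulty recurs at the specialization step: even granting $\grade_S J_{\mathrm{gen}}\geq 3$, finding $g\in\Spin(R)$ with $\grade_R J_g\geq 3$ requires the section $\Spec R\to\Spin_R$ to avoid $V(J_{\mathrm{gen}})$ over every prime $\mf p$ with $\depth R_{\mf p}\leq 2$, of which there may be infinitely many; ``generic over the closed fiber'' does not obviously control them all.

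The paper's proof sidesteps this by never leaving $R$. It acts by one-parameter subgroups for roots $\epsilon_i+\epsilon_j$, which \emph{fix} the two degree-one coordinates $\lambda_i,\lambda_j$ while adding a multiple of a higher-degree coordinate to a lower-degree one. Since the target grade is $3$, one has $c\leq 2$ throughout, so two fixed coordinates are enough to preserve the partial regular sequence already built; each move then strictly improves a well-founded invariant (either $c$ rises, or the set of grade-$c$ associated primes shrinks, or the degree $m$ drops), and the process terminates. This device---choosing group elements that fix enough of the data to protect prior progress---is precisely what your abstract irreducibility argument is missing.
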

	\begin{proof}
		Let $J_k$ denote the image of $\lambda$ restricted to the first $k$ components $H^* \oplus \cdots \oplus \bigwedge^{2k-1} H^*$. Suppose that $\grade J_1 = c < 3$, and let $P_1,\ldots,P_s$ be the grade $c$ associated primes of $J_1$. We will write $\lambda_{1,\ldots,2k-1}$ to mean the coordinate of $\lambda$ on $e'_1 \wedge \cdots \wedge e'_{2k-1}$, and similarly for other indices.
		
		To achieve our goal, we will use the action of certain one-parameter subgroups of the spin group, namely those corresponding to the roots
		\begin{itemize}
			\item $\epsilon_i - \epsilon_j$ ($i \neq j$)
			\item $\epsilon_i + \epsilon_j$ ($i \neq j$).
		\end{itemize}
		The one-parameter subgroups of the first kind generate $\SL(H)$. The action of $\SL(H)$ on the vector $\lambda_1,\ldots,\lambda_n$ is just the standard representation. Since the residue field is infinite, it is well-known that we can arrange (using this action) so that the ideal generated by any set of $c$ coordinates $\lambda_i$ has grade $c$, with grade $c$ associated primes $P_1,\ldots,P_s$.
		
		Let $m$ be the smallest value such that there is a coordinate on $\bigwedge^{2m-1} H^*$ that does not belong to $P_i$ for some $i$. Such an $m$ exists because $\grade J_k \geq 3 > c$ for $k$ sufficiently large. Without loss of generality, say this coordinate is $\lambda_{1,\ldots,2m-1}$.
		
		Acting on $\lambda$ by the one-parameter subgroup associated to $\epsilon_1 + \epsilon_2$ gives $\lambda'$ where $\lambda'_1 = \lambda_1$, $\lambda'_2 = \lambda_2$, and
		\begin{equation}\label{eq:lambdaprime}
			\lambda'_{3,4,\ldots,2m-1} = \lambda_{3,4,\ldots,2m-1} + \alpha \lambda_{1,\ldots,2m-1}
		\end{equation}
		where $\alpha$ denotes the parameter. We will not need to consider what happens to the remaining coordinates.
		
		We set $\alpha = 1$, so that $\lambda'_{3,4,\ldots,2m-1}\notin P_i$. Since $c < 3$, at least $c$ singletons $\lambda_i$ are unchanged, and thus the overall effect is one of the following:
		\begin{itemize}
			\item The set of grade $c$ primes associated to $J_1$ shrinks. If the set disappears entirely, then $c$ increases.
			\item The set of grade $c$ primes associated to $J_1$ stays the same, and $m$ decreases by at least 1 in view of \eqref{eq:lambdaprime}.
		\end{itemize}
		As we repeatedly apply this procedure, neither of these two possibilities can occur indefinitely, so this process must eventually terminate, which can only occur if $c \geq 3$.
	\end{proof}
	\begin{prop}\label{prop:prime-avoidance-H}
		Let $R$ be a local Noetherian ring with infinite residue field, with $1/2 \in R$ and $\Spec R$ irreducible. Let $\mb{B}$ be as in \S\ref{subsec:A-B-notation}, with $B_2$ admitting a hyperbolic basis, and suppose that $\mb{B}$ is acyclic. Then there exists a choice of decomposition $B_2 = H \oplus H^*$ so that the complex $\mb{A}$ constructed in \S\ref{subsec:A-from-B} resolves a grade 3 perfect ideal or the unit ideal.
	\end{prop}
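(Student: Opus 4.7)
The plan is to reduce Proposition~\ref{prop:prime-avoidance-H} to a direct application of Lemma~\ref{lem:H-prime-avoidance} followed by Theorem~\ref{thm:B->A}. Concretely, I will take $\lambda = \tilde{\mbf{a}}_3^* \colon V_\mathrm{odd}^* \to R$ to be the (dual of the) spinor coordinate map attached to $\mb{B}$, find a spin-group element that moves the isotropic summand $H^*$ into general position so that $\lambda|_{H^*}$ has image of grade $\geq 3$, and then observe that with respect to this new decomposition the complex produced by the construction of \S\ref{subsec:A-from-B} satisfies the hypothesis of Theorem~\ref{thm:B->A}.

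First I would check that $\grade I_1(\tilde{\mbf{a}}_3) \geq 3$, which is the input needed by Lemma~\ref{lem:H-prime-avoidance}. Acyclicity of $\mb{B}$ together with the Buchsbaum-Eisenbud First Structure Theorem gives $\grade I_1(\mbf{a}_3) \geq 4$. The defining relation $\mbf{a}_3 = \mbf{P} \circ S_2(\tilde{\mbf{a}}_3)$ (up to a unit scalar, which is globally well-defined because $R$ is a UFD with $\Spec R$ irreducible) shows that every entry of $\mbf{a}_3$ equals a unit times a product of two entries of $\tilde{\mbf{a}}_3$. Hence $I_1(\mbf{a}_3) \subseteq I_1(\tilde{\mbf{a}}_3)^2$, so $\operatorname{rad} I_1(\tilde{\mbf{a}}_3) \supseteq \operatorname{rad} I_1(\mbf{a}_3)$ and therefore $\grade I_1(\tilde{\mbf{a}}_3) \geq 4$.

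Applying Lemma~\ref{lem:H-prime-avoidance} now produces an element $g \in \Spin(B_2)$ whose induced action on $V_\mathrm{odd}^*$ transforms $\lambda$ into a map whose restriction to $H^*$ generates an ideal of grade $\geq 3$. Via the double cover $\Spin(B_2) \to \SO(B_2)$, this is the same as acting on $B_2 = H \oplus H^*$ by an element of $\SO$: one obtains a new hyperbolic decomposition $B_2 = H' \oplus (H')^*$. Since $\Spin(B_2)$ is connected, the transformed isotropic summand $(H')^*$ still lies in the component of the orthogonal Grassmannian opposite to the image of $\delta_3$, so the standing assumption of \S\ref{sec:grade four to three} is preserved. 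Plugging this new decomposition into the construction of \S\ref{subsec:A-from-B}, the differential $d_1$ is by definition the restriction of the transformed $\tilde{\mbf{a}}_3^*$ to $(H')^*$, so the resulting ideal $I = I_{H'}$ has grade $\geq 3$. Theorem~\ref{thm:B->A} then yields acyclicity of $\mb{A}$, and forces either $I = (1)$ or $\grade I = 3$, giving the claim.

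The main obstacle I anticipate is making the first step completely rigorous: the spinor coordinates $\tilde{\mbf{a}}_3$ are determined only up to a global unit scalar, and there is a parallel scalar ambiguity in the relation to $\mbf{a}_3$, so one must confirm that these ambiguities do not disturb the containment $I_1(\mbf{a}_3) \subseteq I_1(\tilde{\mbf{a}}_3)^2$. The UFD and irreducibility hypotheses in the proposition are precisely what let this bookkeeping go through cleanly, while the local plus infinite-residue-field assumptions are needed to run the prime-avoidance argument inside Lemma~\ref{lem:H-prime-avoidance}. A secondary (but routine) check is the equivariance of the construction of \S\ref{subsec:A-from-B} under the $\Spin(B_2)$-action, which ensures that the complex obtained from the transformed decomposition is genuinely of the form to which Theorem~\ref{thm:B->A} applies.
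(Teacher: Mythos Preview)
Your proposal is correct and follows essentially the same approach as the paper: apply Lemma~\ref{lem:H-prime-avoidance} with $\lambda = \tilde{\mbf{a}}_3^*$ and then invoke Theorem~\ref{thm:B->A}. You supply more detail than the paper's one-line proof---in particular the verification that $\grade I_1(\tilde{\mbf{a}}_3)\geq 3$ via the relation to $\mbf{a}_3$, and the check that the spin-group action preserves the component of isotropics---but the skeleton is identical.
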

	\begin{proof}
		This follows immediately by applying the preceding lemma in the case that $\lambda = \tilde{\mbf{a}}_3^*$. Note that the ideal $\lambda(H^*)$ only depends on the choice of $H$, not of $H^*$.
	\end{proof}
	It is interesting to note that, in the proof of Lemma~\ref{lem:H-prime-avoidance}, we rely on the two coordinates $\lambda_{i}, \lambda_{j}$ being unchanged when acting by the one-parameter subgroup associated to $\epsilon_i + \epsilon_j$. So the argument does not work for increasing $\grade \lambda(H^*)$ to 4, even if it is known that the image of $\lambda$ has grade 4.
	
	The case of $\lambda = \tilde{\mbf{a}}_3^*$ shows that this is not just a limitation of the argument, but rather it may be impossible: $\grade I \leq \pdim R/I = 3$ in that case.
	
	\begin{remark}\label{rem:split-in-grade-two-BA}
		The construction of $\mb{A}$ from $\mb{B}$ works even if we only assume that $\mb{B}$ is split exact in grade 2, rather than fully acyclic. With this weaker hypothesis, the row in \eqref{eq:w lift} is still exact, and the spinor coordinates would still generate an ideal of grade $\geq 3$, so the conclusion of Proposition~\ref{prop:prime-avoidance-H} holds. See also Remark~\ref{rem:split-in-grade-two-AB}.
	\end{remark}
	
	\section{Examples}\label{sec:examples}
	In the preceding sections, we have demonstrated the constructions in the simplest possible case, namely when the input is a split exact complex. This was sufficient to prove that the constructions are valid in general. In a future paper, we intend to thoroughly investigate the case when the input is a resolution of an ideal in the linkage class of a complete intersection (licci), with the aim of better understanding the relationship between grade 3 licci ideals of type 2 and grade 4 licci Gorenstein ideals.
	
	At present, we conclude with a partial demonstration of \S\ref{sec:grade four to three} on the Gulliksen-Negard resolution of $(n-1) \times (n-1)$ minors of a generic $n\times n$ matrix. We thank Oana Veliche for helping find a suitable isotropic subspace $H$ for $n=3$.
	\begin{example}
		Let $X = (x_{ij})$ be a generic $n \times n$ matrix, and let $R = \mb{C}[\{x_{ij}\}]$. Following \cite{Gulliksen-Negard72}, the resolution of $R/I_{n-1}(X)$ can be described in the following manner. We view the free modules in the resolution as follows:
		\begin{itemize}
			\item $B_1, B_3$ consists of $n\times n$ matrices.
			\item $B_2$ consists of pairs $(A,B)$ of $n \times n$ matrices where $\operatorname{tr}(A) = \operatorname{tr}(B)$, modulo $\Span(I,I)$.
		\end{itemize}
		The differentials can be described as:
		\begin{itemize}
			\item $\delta_1(A) = \operatorname{tr}(A \operatorname{adj}(X))$
			\item $\delta_2(A,B) = AX-XB$
			\item $\delta_3(A) = (XA,AX)$
			\item $\delta_4(1) = \operatorname{adj}(X)$
		\end{itemize}
		and the symmetric bilinear form
		\begin{equation}\label{eq:GN-bilinear-form}
			\langle (A,B),(C,D) \rangle = \operatorname{tr}(AC-BD)
		\end{equation}
		on $B_2$ induces a isomorphism $B_2^* \cong B_2$ that can be used to make the complex explicitly self-dual.
		
		Let $e_{ij}$ denote the matrix with a single 1 in position $(i,j)$, and all other entries 0. Consider the following basis of $B_2$:
		\[
		\begin{array}{ccc}
			\left\{(e_{ij},0)\right\}_{1\leq i < j \leq n} & \left\{(0,e_{ji})\right\}_{1\leq i < j \leq n} & \left\{(e_{ii},e_{ii})\right\}_{1\leq i \leq n-1}\\
			\left\{(e_{ji},0)\right\}_{1\leq i < j \leq n} & \left\{(0,-e_{ij})\right\}_{1\leq i < j \leq n} & \left\{\frac{1}{2} (e_{ii}-e_{nn}, e_{nn}-e_{ii})\right\}_{1\leq i \leq n-1}
		\end{array}
		\]
		This basis is homogeneous with respect to the evident $\mb{Z}^{2n}$ multigrading on $R$. Using \eqref{eq:GN-bilinear-form}, it is straightforward to check that this is a hyperbolic basis: if we let $H_0$ be the isotropic subspace with basis given by the upper row, then we may take $H_0^*$ to be the span of the lower row, with the lower row giving the dual basis.
		
		Consider the skew map $H_0 \to H_0^*$ sending $(e_{ij},0) \mapsto -(0,-e_{ij})$, $(0,e_{ji}) \mapsto (e_{ji},0)$, and other basis elements to 0. Then the graph of this skew map is $\{(A,A)\} \subset B_2$. Notice that if consider the homomorphism $R \to \mb{C}$ specializing $X\mapsto I$, then this would be none other than the image of $\delta_3$ after specialization. Therefore $H_0$ and $\operatorname{im}(\delta_3)$ belong to the same component of isotropics.
		
		Suppose that $n=3$. Let
		\begin{align*}
			H &= \Span\Big\{(e_{21},0),(e_{32},0),(0,-e_{12}),(0,-e_{23}),\\
			&\quad (e_{13},0),(0,e_{31}),(e_{22},e_{22}),\frac{1}{2}(e_{11}-e_{33},e_{33}-e_{11})\Big\}.
		\end{align*}
		Comparing to the bases of $H_0$ and $H_0^*$ written before, one can verify that if $H^*$ is any complementary isotropic to $H$, then $H^*$ and $H_0$ are in different components of isotropics.
		
		Furthermore, the hypothesis of Theorem~\ref{thm:B->A} is met with this choice of $H$. Note that we do not need to compute the ideal $I$ of spinor coordinates directly, as by Remark~\ref{rem:same-radical} it is enough to verify that $d_2$ is plausibly the middle differential of a length three resolution, and this can be checked in Macaulay2.
		
		Explicitly, restricting $\delta_2$ to $H$ and taking the dual gives us
		\[
		d_2 = \left(\!\begin{array}{ccccccccc}
			0&x_{1,1}&0&0&x_{1,2}&0&0&x_{1,3}&0\\
			0&0&x_{2,1}&0&0&x_{2,2}&0&0&x_{2,3}\\
			0&0&0&x_{1,1}&x_{2,1}&x_{3,1}&0&0&0\\
			0&0&0&0&0&0&x_{1,2}&x_{2,2}&x_{3,2}\\
			x_{3,1}&0&0&x_{3,2}&0&0&x_{3,3}&0&0\\
			-x_{1,3}&-x_{2,3}&-x_{3,3}&0&0&0&0&0&0\\
			0&x_{2,1}&0&-x_{1,2}&0&-x_{3,2}&0&x_{2,3}&0\\
			x_{1,1}&x_{2,1}/2&0&x_{1,2}/2&0&-x_{3,2}/2&0&-x_{2,3}/2&-x_{3,3}
		\end{array}\!\right)
		\]
		and by computing the kernel of $d_2$ and $d_2^*$, we get
		\begin{gather*}
		d_3 = \left(\!\begin{array}{cc}
			-x_{2,3}x_{3,2}+x_{2,2}x_{3,3}&0\\
			x_{1,3}x_{3,2}&x_{1,2}x_{3,3}\\
			-x_{1,3}x_{2,2}&-x_{1,2}x_{2,3}\\
			x_{2,3}x_{3,1}&x_{2,1}x_{3,3}\\
			-x_{1,3}x_{3,1}&-x_{1,1}x_{3,3}\\
			x_{1,3}x_{2,1}-x_{1,1}x_{2,3}&0\\
			-x_{2,2}x_{3,1}&-x_{2,1}x_{3,2}\\
			x_{1,2}x_{3,1}-x_{1,1}x_{3,2}&0\\
			x_{1,1}x_{2,2}&x_{1,2}x_{2,1}
		\end{array}\!\right)\\
		d_1^* = \left(\!\begin{array}{c}
		-x_{2,1}x_{2,3}x_{3,2}+x_{2,1}x_{2,2}x_{3,3}\\
		x_{1,2}x_{3,1}x_{3,3}-x_{1,1}x_{3,2}x_{3,3}\\
		x_{1,2}x_{2,3}x_{3,2}-x_{1,2}x_{2,2}x_{3,3}\\
		-x_{1,3}x_{2,1}x_{3,3}+x_{1,1}x_{2,3}x_{3,3}\\
		x_{1,2}x_{1,3}x_{2,1}-x_{1,1}x_{1,2}x_{2,3}\\
		x_{1,2}x_{2,1}x_{3,1}-x_{1,1}x_{2,1}x_{3,2}\\
		x_{1,2}x_{2,3}x_{3,1}/2+x_{1,3}x_{2,1}x_{3,2}/2-x_{1,1}x_{2,2}x_{3,3}\\
		x_{1,2}x_{2,3}x_{3,1}-x_{1,3}x_{2,1}x_{3,2}
		\end{array}\!\right)
		\end{gather*}
		and Macaulay2 verifies that this assembles into a length three resolution.
		
		It would be interesting to see whether there is a systematic choice of $H$ that works for arbitrary $n$. Ideally this choice of $H$ should either be easy to work with algebraically, or have some sort of geometric significance.
	\end{example}

	\printbibliography
\end{document}